\newcommand{\bd}{\begin{description}}
\newcommand{\ed}{\end{description}}
\newcommand{\bi}{\begin{itemize}}
\newcommand{\ei}{\end{itemize}}
\newcommand{\be}{\begin{enumerate}}
\newcommand{\ee}{\end{enumerate}}
\newcommand{\beq}{\begin{equation}}
\newcommand{\eeq}{\end{equation}}
\newcommand{\beqs}{\begin{eqnarray*}}
\newcommand{\eeqs}{\end{eqnarray*}}
\definecolor{DarkGreen}{rgb}{0.2, 0.6, 0.3}
\newtheorem{theorem}{Theorem}[section]
\newtheorem{lemma}{Lemma}[section]
\newtheorem{claim}{Claim}
\newtheorem{subclaim}{Subclaim}
\begin{document}
\title{\textbf{Euclidean Gallai-Ramsey Theory}  }

\author{Yaping Mao\footnote{Faculty of Environment and Information
Sciences, Yokohama National University, 79-2 Tokiwadai, Hodogaya-ku,
Yokohama 240-8501, Japan. {\tt mao-yaping-ht@ynu.ac.jp} and {\tt ozeki-kenta-xr@ynu.ac.jp}} , \ \ Kenta
Ozeki\footnotemark[1] \footnote{
This  work  was  supported by JSPSKAKENHI Grant Numbers 18K03391, 19H01803, 20H05795
}, \ \ Zhao
Wang\footnote{College of Science, China Jiliang University, Hangzhou
310018, China. {\tt wangzhao@mail.bnu.edu.cn}}~\footnote{This  work  was supported by the National
Science Foundation of China (No. 12061059).}}
\date{}
\maketitle

\begin{abstract}
In this paper, we introduce Euclidean Gallai-Ramsey theory,
by combining Euclidean Ramsey theory and Gallai-Ramsey theory on graphs.
More precisely,
we consider the following problem:
\begin{quote}
For an integer $r$ and configurations $K$ and $K'$,
does there exist an integer $n_0$
such that
for any $r$-coloring of the points of $n$-dimensional Euclidean space
with $n \geq n_0$,
there is a monochromatic configuration congruent to $K$
or
a rainbow configuration congruent to $K'$?
\end{quote}
In particular,
we give a bound on $n_0$ for some configurations $K$ and $K'$,
such as triangles and rectangles.
Those are extensions
of ordinary Euclidean Ramsey theory
where the purpose is to find a monochromatic configuration.
\\[2mm]
\if0
Euclidean Ramsey theory of studying the monochromatic configurations
under a coloring of all the points in an Euclidean space have
received wide attention and many results were worked out after
Erd\"{o}s et al. introduced such problems on this subject. In 1967,
Gallai first examined this structure of complete graphs that contain
a rainbow copy triangle or a monochromatic copy of subgraph.
Inspired this nice idea, we introduce the Euclidean Gallai-Ramsey
problems and get some Euclidean Gallai-Ramsey theorems involving
triangles, rectangle, and unit square. The configurations that are not Gallai-Ramsey are also studied. \\[2mm]
\fi
{\bf Keywords:} Ramsey theory; Euclidean geometry; Euclidean Gallai-Ramsey theory
\\[2mm]
{\bf AMS subject classification 2020:} 05C55; 05D10; 51M04.
\end{abstract}

\section{Introduction}
\emph{Ramsey theory} is classical but still plays an important role in combinatorics.
Roughly speaking,
Ramsey theory aims to find certain monochromatic structures
for any coloring of a huge object.
The most famous theorem on Ramsey theory is about graphs,
and states that
for any integer $r$ and any graph $G$,
there exists an integer $n_0$ such that
for any $n \geq n_0$,
any (not necessarily proper) $r$-coloring of the edges of the complete graph $K_n$,
there is a monochromatic subgraph isomorphic to $G$.
We refer the reader
to a dynamic survey \cite{Radziszowski}
and a book \cite{GrahamRothschildSpencer} for Ramsey theory on graphs.
Ramsey theory has been extended in many areas,
such as Number theory, Set theory and so on.

In this paper, we focus on Ramsey theory of Euclidean space,
so called \emph{Euclidean Ramsey theory}.
We denote by $\mathbb{E}^n$ the $n$-dimensional Euclidean space.
A \emph{configuration} is
a finite set of points in an Euclidean space.
Two configurations $A=\{a_1,\ldots,a_t\}$ and $B=\{b_1,\ldots,b_t\}$
(not necessarily in the same space)
are
\emph{congruent} if
there exists a bijection $\varphi: \{1,2, \dots ,t\} \to \{1,2, \dots ,t\}$
such that $d(a_i,a_j)=d(b_{\varphi(i)},b_{\varphi(j)})$ for $1\leq
i<j\leq t$, where $d$ is the distance in the Euclidean space.
For a positive integer $r$,
we call an $r$-coloring of points of the $n$-dimensional Euclidean space
simply an \emph{$r$-coloring of $\mathbb{E}^n$}.
A configuration in $r$-colored $\mathbb{E}^n$
is said to be \emph{monochromatic}
if all the points in the configuration are colored by
the same color.
Euclidean Ramsey theory considers
the problems that
for any positive integer $r$ and any configuration $K$,
does there exist an integer $n_0$ such that
for any $n \geq n_0$ and any $r$-coloring of $\mathbb{E}^n$,
there exists a monochromatic configuration congruent to $K$?
Erd\"{o}s, Graham, Montgomery, Rothschild, Spencer, and
Straus \cite{EGMRSS1973} in 1973 first
investigated this problem.
We refer the reader to survey papers \cite{GrahamTressler, GrahamOldNew, 
GrahamRecentTrend}.

Recently, Gallai-Ramsey theory, which is a variation of Ramsey theory,
is widely studied.
This aims to find not only monochromatic structures
but also so-called rainbow ones
where any two elements have distinct colors.
After the work by Gallai \cite{MR0221974} on a rainbow triangle in $r$-colored complete graphs
(see also \cite{MR1464337, MR2063371}),
this study is called Gallai-Ramsey theory.
We refer to a dynamic survey \cite{FMO14} and some recent papers
\cite{FM, LB, MSonline} of Gallai-Ramsey numbers.

The purpose of this paper is to introduce a new concept,
\emph{Euclidean Gallai-Ramsey Theory}
by combining these two studies on Ramsey theory.
More precisely,
we consider the following problem,
where
a \emph{rainbow} configuration in $r$-colored $\mathbb{E}^n$
is one with all points colored by distinct colors:
\begin{quote}
For a positive integer $r$ and configurations $K$ and $K'$,
does there exist an integer $n_0$
such that
for any $r$-coloring of the points of $n$-dimensional Euclidean space
with $n \geq n_0$,
there is a monochromatic configuration congruent to $K$
or
a rainbow configuration congruent to $K'$?
\end{quote}
We consider this problem
for some basic configurations,
such as triangles and rectangles.

First,
we consider the case where both $K$ and $K'$ are $T$,
where $T$ is a triangle with angles $30$, $60$ and $90$ degrees
and with hypotenuse of unit length,
B\'{o}na \cite{Bona} proved that
for any $3$-coloring of $\mathbb{E}^3$,
there exists a monochromatic configuration congruent to $T$,
and also
there exists a $12$-coloring of $\mathbb{E}^3$
in which there is no monochromatic configuration congruent to $T$.
In contrast with this result,
we show that
there always exist a monochromatic configuration congruent to $T$
or
a rainbow configuration congruent to $T$
in any $r$-coloring of $\mathbb{E}^3$ with $r \geq 3$.
We prove it in Section \ref{sec:tri}.
\begin{theorem}\label{th2-7}
For any positive integer $r$
and any $r$-coloring of $\mathbb{E}^3$,
there is a monochromatic configuration congruent to $T$
or
a rainbow configuration congruent to $T$.
\end{theorem}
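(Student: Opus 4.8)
\emph{Proof plan.} The plan is to split on the number of colors $r$. For $r\le 3$ the statement is immediate from B\'ona's theorem \cite{Bona}: a coloring of $\mathbb{E}^3$ using at most three colors is in particular a $3$-coloring, hence already contains a monochromatic copy of $T$. So I would assume $r\ge 4$ and, arguing by contradiction, fix an $r$-coloring $\chi$ of $\mathbb{E}^3$ with neither a monochromatic nor a rainbow copy of $T$; equivalently, every copy of $T$ receives exactly two colors.

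The driving observation is that $T$ appears densely inside two very symmetric gadgets. First, a rectangle with side lengths $\tfrac{\sqrt3}{2}$ and $\tfrac{1}{2}$ has diagonals of length $1$, so each of its four vertex-triples has distance multiset $\{\tfrac{1}{2},\tfrac{\sqrt3}{2},1\}$ and is therefore congruent to $T$; since no such triple may be monochromatic or rainbow, a short case analysis on four points forces the four vertices to carry exactly two colors, two vertices of each. Second, the six vertices of a regular hexagon of side $\tfrac{1}{2}$ realize precisely the distances $\tfrac{1}{2}$, $\tfrac{\sqrt3}{2}$, $1$ (as sides, short diagonals, and long diagonals), so the hexagon contains many copies of $T$; the same analysis pins its coloring down to one of only three patterns, the extreme one being ``three colors, with opposite vertices agreeing.'' I would establish these two local structure facts first.

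The rest of the argument would propagate this rigidity until it collapses. Rectangles and hexagons glue along shared edges to tile a plane (the triangular grid of spacing $\tfrac{1}{2}$), linking the forced two-color patterns of neighboring cells, and, since such a gadget can be rotated about any of its edges, a pattern in one plane also constrains points off that plane. Concretely, above the center $O$ of a hexagon $H$ with vertices $v_1,\dots,v_6$ there is a point $P$ with $|Pv_i|=1$ for all $i$ and $|PO|=\tfrac{\sqrt3}{2}$, so each $\{P,O,v_i\}$ is a copy of $T$; in the ``three-color'' case this forces $\chi(O)=\chi(P)$ to be a color not used on $H$, and then the rectangle gadget on $\{P,O,v_i,\,v_i+(P-O)\}$ forces a congruent copy of $H$ one ``floor'' up the axis through $O$, after which the argument iterates. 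Chasing the forced colors through a large enough finite subset of $\mathbb{E}^3$ must eventually produce a copy of $T$ that is monochromatic or rainbow, the desired contradiction. A convenient way to dispose of the easy cases is to let $G$ be the graph on the color classes with $i\sim j$ whenever some copy of $T$ uses exactly the colors $i$ and $j$: if $G$ is $3$-colorable, composing $\chi$ with a proper $3$-coloring of $G$ yields a coloring of $\mathbb{E}^3$ by at most three colors with no monochromatic $T$, contradicting B\'ona's theorem; so only the case where $G$ has chromatic number at least $4$ needs the geometric chase.

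The step I expect to be the real obstacle is exactly that surviving case. The rigid multicolored patterns --- the ``opposite vertices agree'' hexagon and the vertical stackings it forces --- are locally self-consistent (one checks directly that such a stacking contains no monochromatic copy of $T$), so no single-plane or single-gadget argument can defeat them; one has to combine several non-coplanar copies of $T$ and track the colors they force until the bookkeeping closes up into a contradiction, and keeping that finite configuration and case analysis under control is the delicate part.
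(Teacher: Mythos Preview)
Your proposal is not a proof: you explicitly leave the main case unresolved. After the reduction via B\'ona's theorem and the graph-coloring trick, you are left with the situation where the auxiliary graph $G$ has chromatic number at least $4$, and there you only assert that ``chasing the forced colors through a large enough finite subset of $\mathbb{E}^3$ must eventually produce'' a contradiction, calling the bookkeeping ``the delicate part.'' That sentence is a hope, not an argument; nothing in what precedes it tells you which finite configuration to look at or why the chase terminates. The local rigidity facts you state about the $(\tfrac{\sqrt3}{2},\tfrac12)$-rectangle and the side-$\tfrac12$ hexagon are correct, but---as you yourself observe---they are consistent with large two- and three-colored patterns, so a genuinely new idea is still missing.

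The paper avoids all of this with a short direct construction that works uniformly for every $r$ and does not invoke B\'ona's theorem. If at least two colors appear, take a bichromatic segment $AB$ of length $\tfrac{\sqrt3}{2}$ (say $A$ red, $B$ blue) and a point $C$ making $ABC$ equilateral; swapping $A,B$ if necessary, assume $C$ is not blue. Now place a regular hexagon $A_1\cdots A_6$ of side $\tfrac12$ centered at $A$, in the plane through $A$ perpendicular to $AB$, with $A_1,A_4$ on the line through $A$ perpendicular to the plane of $ABC$. Each triple $\{A,B,A_i\}$ is a copy of $T$, so every $A_i$ is red or blue. If $A_1,A_4$ share a color, the triangles $\{A_1,A_4,A_j\}$ for $j\in\{2,3,5,6\}$ force the other four vertices to the opposite color, and then $\{A_2,A_3,A_5\}$ is monochromatic. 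Otherwise $A_1$ is red and $A_4$ is blue; then $\{A,A_1,C\}$ is monochromatic if $C$ is red, and $\{A,A_4,C\}$ is rainbow otherwise. Your hexagon gadget is the right object, but the move you are missing is to anchor it perpendicularly to a single two-colored segment rather than to tile a plane with it; that collapses the whole problem to a nine-point case analysis.
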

\if0
As explained above,
there exists a $12$-coloring of $\mathbb{E}^3$
in which there is no monochromatic configuration congruent to $T$.
In contrast with this,
we can find
a monochromatic configuration congruent to $T$
or
a rainbow configuration congruent to $T$
in any $r$-coloring with $r \geq 3$.
\fi

Next,
we consider the case where both $K$ and $K'$ is a rectangle.
Erd\"{o}s, Graham, Rothschild, Montgomery, Spencer, and Straus \cite{EGMRSS1973}
proved that for any rectangle $Q$ and any $2$-coloring of $\mathbb{E}^8$,
there exists a monochromatic rectangle congruent to $Q$.
T\'{o}th \cite{Toth1996} improved this theorem
by showing that
the same statement holds even for $\mathbb{E}^5$.
Cantwell \cite{CantwellFinite1996}
showed that $\mathbb{E}^4$ suffices when a rectangle $Q$ is a square.
T\'{o}th \cite{Toth1996} also investigated colorings with many
colors and proved that for any $r$-coloring of $\mathbb{E}^{r^2 +o(r^2)}$,
there exists a monochromatic configuration congruent to any rectangle.
We extend his result to the following theorem,
which will be proven in Section \ref{sec:rect}.
\begin{theorem}
\label{thm:rectangle}
Let $r$ be a positive integer and $Q$ be a rectangle.
Then,
for any $r$-coloring of $\mathbb{E}^{13r+4}$,
there exists a monochromatic rectangle congruent to $Q$
or a rainbow rectangle congruent to $Q$.
\end{theorem}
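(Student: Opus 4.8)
The plan is to reduce Theorem~\ref{thm:rectangle} to a combinatorial statement about colorings of a rectangular grid. Write $Q$ as a rectangle with side lengths $a$ and $b$ (the case $a=b$ of a square being included), and regard $Q$ as the product $\{0,a\}\times\{0,b\}$. Pick a regular simplex $X=\{v_0,\dots,v_p\}\subseteq\mathbb{E}^{p}$ with all pairwise distances equal to $a$ and a regular simplex $Y=\{w_0,\dots,w_q\}\subseteq\mathbb{E}^{q}$ with all pairwise distances equal to $b$. Then in the grid $X\times Y\subseteq\mathbb{E}^{p+q}$, for every $i\neq i'$ and $j\neq j'$ the four points $(v_i,w_j),(v_{i'},w_j),(v_i,w_{j'}),(v_{i'},w_{j'})$ form a rectangle congruent to $Q$, since among their six pairwise distances two equal $a$, two equal $b$, and two equal $\sqrt{a^2+b^2}$. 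Consequently, an $r$-coloring of $\mathbb{E}^{p+q}$ restricts to an $r$-coloring of the $(p+1)\times(q+1)$ combinatorial grid in which a monochromatic, respectively rainbow, axis-parallel combinatorial rectangle produces a monochromatic, respectively rainbow, rectangle congruent to $Q$. So it is enough to establish the following \emph{Grid Lemma}: there exist integers $p,q$ with $p+q\le 13r+4$ such that every $r$-coloring of the $(p+1)\times(q+1)$ grid contains a monochromatic combinatorial rectangle or a rainbow combinatorial rectangle.

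To prove the Grid Lemma, suppose an $r$-coloring of the $(p+1)\times(q+1)$ grid has no rainbow combinatorial rectangle; we look for a monochromatic one. When $r$ is small this follows from counting alone: a color class with no combinatorial rectangle is a rectangle-free set, hence has only about $M\sqrt{N}$ cells in an $M\times N$ grid (the Zarankiewicz bound), so for $p\approx q\approx\tfrac{13r+4}{2}$ the $r$ color classes cannot cover all $(p+1)(q+1)$ cells; this already handles $r\le 3$, for which a rainbow rectangle cannot exist anyway. For $r\ge 4$ the counting is too crude and the absence of rainbow rectangles must be exploited structurally. The starting point is to unwind the condition: two columns $u,v\in[r]^{p+1}$ span no rainbow combinatorial rectangle precisely when $|\{u_i,u_{i'},v_i,v_{i'}\}|\le 3$ for all rows $i,i'$ (and symmetrically for rows). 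From this one argues a dichotomy. If some row is ``rich'', i.e.\ uses many colors, then compatibility with that row confines the admissible columns to boundedly many types, so among $q+1$ columns two of them repeat a color on a common pair of rows, giving a monochromatic combinatorial rectangle. If every row is ``poor'', a sunflower/Helly-type argument on the color sets of the rows shows that the rows share a common color or together use only a bounded number of colors, which reduces the problem to a bounded-color question on a subgrid settled by the counting bound above. Tuning the thresholds in the two cases and the division of the dimension budget between $p$ and $q$ is what yields the constant $13$ and the additive term $4$.

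The main obstacle is the structural case. The obvious way to handle a rich row — recording, for each column, its entire coloring pattern on the rows — produces exponentially many column types and hence a dimension exponential in $r$; to keep the dimension linear one must instead show that the compatibility relation forced by a $k$-colored row admits only $O(r)$ genuinely distinct columns, so that $O(r)$ columns already suffice to extract a monochromatic combinatorial rectangle. A secondary difficulty is to perform the sunflower reduction, the passage to subgrids, and the final Zarankiewicz-type counting economically enough that the total dimension comes out to exactly $13r+4$ rather than a larger linear function of $r$.
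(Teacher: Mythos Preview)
Your reduction to a combinatorial grid lemma via two regular simplices in orthogonal subspaces is correct and is exactly what the paper does; the paper takes $p+1=2r+5$ rows and $q+1=11r+1$ columns, so $p+q=13r+4$.

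The gap is in your proof of the Grid Lemma. Your rich/poor dichotomy is plausible but you yourself identify the rich-row case as the ``main obstacle'' and do not resolve it: the claim that compatibility with a $k$-colored row cuts the columns down to only $O(r)$ types is asserted, not proved, and it is precisely here that the linear bound must be won. The paper does \emph{not} argue via this dichotomy. Its key structural step is instead to show that, in the absence of both monochromatic and rainbow combinatorial rectangles, \emph{every} row of the $(2r+5)\times(11r+1)$ grid is nearly monochromatic: one color occupies at least $10r+1$ of its $11r+1$ entries. This is established by a chain of claims: first the most popular color in a fixed row appears at least $12$ times; then, restricting the other $2r+4$ rows to those $\ge 12$ positions, at least five of them must use at least three colors other than the dominant one (otherwise a delicate type-counting on the remaining rows forces a monochromatic or rainbow rectangle); finally those five rows are used to show that the second and third most popular colors in the fixed row occur at most twice and at most once, respectively, which forces the dominant color to occupy $\ge (11r+1)-r=10r+1$ cells. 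Once every row is nearly monochromatic, pigeonhole on $2r+5$ rows and $r$ possible dominant colors yields two rows with the same dominant color, and their overlap of size $\ge 9r+1$ gives a monochromatic rectangle.

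None of this ``nearly monochromatic rows'' mechanism appears in your sketch, and your alternative route is left open exactly at the point where the work lies. As written, the proposal does not establish the Grid Lemma and hence does not prove the theorem.
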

Thus,
compared with the result by T\'{o}th,
an Euclidean space with much fewer dimension suffices
to find certain configurations in any $r$-coloring.
On the other hand,
for a rectangle $Q$ with side length $a$ and $b$ with $a \leq b \leq \sqrt{3} a$,
we show that the same statement does not hold for $r \geq 3$
if we replace $\mathbb{E}^{13r+4}$ with $\mathbb{E}^2$.
We prove this in Section \ref{sec:lowerbound}.
\begin{theorem}
\label{thm:lowerbound}
For a positive integer $r \geq 3$
and a rectangle $Q$ with side length $a$ and $b$ with $a \leq b \leq \sqrt{3} a$,
there exists an $r$-coloring of $\mathbb{E}^{2}$
that contains
neither a monochromatic rectangle congruent to $Q$
nor a rainbow rectangle congruent to $Q$.
\end{theorem}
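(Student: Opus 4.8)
My plan is to give an explicit coloring that uses only three of the $r \ge 3$ available colors; since a rainbow rectangle consists of four points with pairwise distinct colors, such a coloring automatically contains no rainbow copy of $Q$, and it then remains only to rule out monochromatic copies.

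Assume $a \le b \le \sqrt 3 a$, so $a$ is the shorter side. Partition $\mathbb{E}^2$ into the horizontal strips $S_i = \{(x,y) : ia \le y < (i+1)a\}$ for $i \in \mathbb{Z}$, and color every point of $S_i$ with color $(i \bmod 3)+1$. The geometric fact I would isolate first is the following: if $R$ is any rectangle in the plane congruent to $Q$ and the side of $R$ of length $a$ makes angle $\theta$ with the $x$-axis, then the length of the orthogonal projection of $R$ onto the $y$-axis equals $a|\sin\theta| + b|\cos\theta|$. As $\theta$ varies, this quantity attains its minimum $a$ at an axis-parallel placement and, by Cauchy--Schwarz, its maximum $\sqrt{a^2+b^2}$ (the diagonal of $Q$); hence it always lies in $[a, \sqrt{a^2+b^2}]$. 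This is where the hypothesis enters: $b \le \sqrt 3 a$ gives $\sqrt{a^2+b^2} \le 2a$, so the vertical extent of every congruent copy of $Q$ lies in $[a, 2a]$.

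With this in hand, non-monochromaticity is immediate. Let $R$ be congruent to $Q$, let $y_0$ be the smallest $y$-coordinate among the four vertices of $R$, and let $y_0 + s$ be the largest, so $s \in [a, 2a]$. If $y_0 \in S_i$, then $y_0 + s \ge y_0 + a$ puts the topmost vertex in some $S_j$ with $j \ge i+1$, while $y_0 + s \le y_0 + 2a$ gives $j \le i+2$; thus $j - i \in \{1,2\}$, so $j \not\equiv i \pmod 3$ and the topmost and bottommost vertices of $R$ receive different colors. Hence $R$ is not monochromatic, which finishes the proof. The argument is short; the only steps that need care are verifying the displayed range for the vertical extent (precisely where $b \le \sqrt 3 a$ is used) and checking the boundary cases $s = a$ and $s = 2a$ against the half-open convention for the strips $S_i$.
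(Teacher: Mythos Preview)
Your proof is correct. Both you and the paper use a strip coloring of width $a$, but you color the strips modulo $3$ while the paper colors them modulo $r$. Your choice pays off: with only three colors present, no four points can be rainbow, so that case is immediate; the paper instead must argue that four strips of distinct colors are spread over horizontal distance greater than $2a$, exceeding the diagonal of $Q$. For the monochromatic case you argue directly that the top and bottom vertices lie in strips $1$ or $2$ apart (hence different colors mod $3$), whereas the paper argues that same-colored strips are at distance larger than $(r-1)a \ge 2a$, forcing all four vertices into a single strip, which then cannot accommodate $Q$. The underlying geometric input---that the projection of any congruent copy of $Q$ onto a fixed axis lies in $[a,\sqrt{a^2+b^2}] \subseteq [a,2a]$---is the same in both arguments; your version packages it slightly more cleanly.
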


We also consider the case where
$K$ is a particular triangle or a square,
and $K'$ is an equilateral triangle.
More precisely,
we prove the following theorem in Section \ref{sec:others},
where an $(a,b,c)$-triangle
is one with side length $a, b$ and $c$:
\begin{theorem}\label{th-2-16}
For a positive integer $r$,
the following hold:

\noindent
$(1)$ For any $r$-coloring of $\mathbb{E}^{r+4}$,
there exists a monochromatic unit square or a rainbow equilateral triangle.

\noindent
$(2)$ For any $r$-coloring of $\mathbb{E}^{r+3}$,
there exists a monochromatic $(1, 1, \sqrt{2})$-triangle
or a rainbow equilateral triangle.
\end{theorem}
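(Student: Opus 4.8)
The plan is to assume there is no rainbow equilateral triangle — otherwise we are done — and to derive the desired monochromatic configuration from this hypothesis. The structural fact I would start from is the \emph{simplex observation}: if $c$ is an $r$-coloring of some $\mathbb{E}^{m}$ with no rainbow equilateral triangle of side $s$, then the vertex set of every regular simplex of edge length $s$ receives at most two colors, since three vertices in three distinct colors would themselves form a rainbow equilateral triangle of side $s$. As a regular simplex with $m+1$ vertices lives in $\mathbb{E}^{m}$, this already forces a large substructure to be $2$-colored.

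The heart of the argument is a color-reduction lemma: an $r$-coloring of $\mathbb{E}^{n}$ with no rainbow equilateral triangle can be pushed down, losing one color at the cost of a bounded number of dimensions, until only two colors remain. For a single reduction step I would use the sphere of apices of equilateral triangles of side $s$ erected over a segment $pq$ with $d(p,q)=s$: if $c(p)\neq c(q)$, every point of this (codimension-two) sphere is colored like $p$ or like $q$, for otherwise it completes a rainbow equilateral triangle; and a bichromatic pair at distance $s$ exists as soon as $c$ is nonconstant, since the distance-$s$ graph of $\mathbb{E}^{n}$ is connected. Combining this with the simplex observation, and keeping track of which sphere (and which radius) one retains, one reduces to the two-color case on a Euclidean space — or on a round sphere of admissible radius — of dimension at least $4$ for part $(1)$ and at least $3$ for part $(2)$. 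The one-dimension saving in part $(2)$ comes from the base case: a $(1,1,\sqrt2)$-triangle is ``half'' of a unit square and is a spherical configuration, so it admits a two-color Euclidean Ramsey statement in one fewer dimension.

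For the two-color endgame, part $(1)$ follows from Cantwell's theorem \cite{CantwellFinite1996} that every $2$-coloring of $\mathbb{E}^{4}$ contains a monochromatic square congruent to a prescribed one — applied to the unit square — together with its counterpart on a $4$-sphere of admissible radius; part $(2)$ uses the analogous, easier statement that every $2$-coloring of $\mathbb{E}^{3}$ (or of an admissible $3$-sphere) contains a monochromatic $(1,1,\sqrt2)$-triangle. A monochromatic sphere produced along the way is handled directly: a monochromatic sphere of dimension at least $4$ (resp.\ $3$) and radius at least the circumradius $\frac{1}{\sqrt2}$ contains a unit square (resp.\ a $(1,1,\sqrt2)$-triangle).

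The step I expect to be the main obstacle is the color-reduction lemma and, within it, the dimension and radius bookkeeping. Colorings that are constant on concentric spheres, or on parallel slabs of width a little larger than $s$, use arbitrarily many colors, contain no rainbow equilateral triangle, and contain no monochromatic affine subspace at all; so the reduction genuinely must output spheres rather than subspaces, and must keep their radii inside the window where the unit square (resp.\ $(1,1,\sqrt2)$-triangle) still embeds and the two-color result still applies. Carrying this control through the $r-2$ reduction steps is what makes the bounds come out to exactly $r+4$ and $r+3$.
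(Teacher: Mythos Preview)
Your proposal is a strategy sketch, not a proof, and the central step does not work as you describe. The apex sphere over a bichromatic segment $pq$ in $\mathbb{E}^{n}$ is an $(n-2)$-sphere, so each ``color-reduction'' costs two dimensions, not one; iterating $r-2$ times from $\mathbb{E}^{r+4}$ lands you in dimension $8-r$, which is negative for $r\ge 9$. You cannot rescue this by switching to the simplex observation: knowing that one regular $(r+4)$-simplex is $2$-colored does not localize the coloring on any subspace or sphere, and nested apex constructions produce intersections of spheres whose radii you do not control. You also invoke a ``counterpart on a $4$-sphere of admissible radius'' of Cantwell's theorem and an analogous $2$-color statement on $3$-spheres for the $(1,1,\sqrt2)$-triangle, but neither is established here, and the admissible-radius window shrinks as you iterate. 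In short, the reduction-to-two-colors paradigm does not account for the exact thresholds $r+4$ and $r+3$, and the endgame lemmas you need are themselves nontrivial.

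The paper's argument is completely different and avoids all of this analysis. It embeds the edge set of the complete graph $K_t$ into $\mathbb{E}^{t}$ by sending the edge $v_iv_j$ to the point with $\tfrac{1}{\sqrt2}$ in coordinates $i$ and $j$ and $0$ elsewhere. Under this map any triangle $v_iv_jv_k$ becomes a unit equilateral triangle, any $4$-cycle $v_iv_jv_kv_\ell$ becomes a unit square, and any path $v_iv_jv_kv_\ell$ becomes a $(1,1,\sqrt2)$-triangle. An $r$-coloring of $\mathbb{E}^{t}$ thus induces an $r$-edge-coloring of $K_t$, and the graph Gallai--Ramsey numbers $\operatorname{gr}_r(K_3:C_4)=r+4$ and $\operatorname{gr}_r(K_3:P_4)=r+3$ from \cite{FaudreeGouldJacobsonMagnant} give the result immediately. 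The exact dimensions $r+4$ and $r+3$ are inherited from these graph-theoretic thresholds, not from any geometric bookkeeping.
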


\section{Monochromatic triangle versus rainbow triangle}
\label{sec:tri}

In this section,
we prove Theorem \ref{th2-7},
which states that
for any integer $r \geq 3$
and any $r$-coloring of $\mathbb{E}^3$,
there is a monochromatic configuration congruent to $T$
or
a rainbow configuration congruent to $T$.
Recall that
$T$ is a triangle with angles $30$, $60$ and $90$ degrees
and with hypotenuse of unit length.
For the proof, we use the following theorem.
\begin{theorem}\label{th2-1}
\label{unitline_thm}
Let $r$ be a positive integer and let $d$ be a positive real number,
Then, for any $r$-coloring of $\mathbb{E}^{2}$,
if at least two colors are used,
then there exists a rainbow segment (that is, two points with distinct colors)
of length $d$.
\end{theorem}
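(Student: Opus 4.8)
The plan is to argue by contradiction. Suppose that some $r$-coloring $c$ of $\mathbb{E}^2$ uses at least two colors but contains no rainbow segment of length $d$; that is, any two points at distance exactly $d$ receive the same color. Form the ``distance-$d$ graph'' $G_d$ whose vertex set is $\mathbb{E}^2$ and in which $x$ and $y$ are adjacent precisely when $|xy| = d$. The hypothesis says that $c$ is constant on each edge of $G_d$, hence constant on each connected component of $G_d$. So it suffices to prove that $G_d$ is connected: then $c$ is constant on all of $\mathbb{E}^2$, contradicting the assumption that at least two colors appear.

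To see that $G_d$ is connected, I would first note that any two points $p,q$ with $0<|pq|\le 2d$ are joined by a path of length two in $G_d$: the two circles of radius $d$ centered at $p$ and at $q$ intersect, since their centers lie at distance at most $2d$, so any common point $x$ satisfies $|px|=|xq|=d$, giving the path $p,x,q$. For points $p,q$ at arbitrary distance, subdivide the segment $pq$ into $n=\lceil |pq|/(2d)\rceil$ equal parts by points $p=y_0,y_1,\dots,y_n=q$; consecutive division points are at distance $|pq|/n\le 2d$, hence each pair $y_{i-1},y_i$ is joined by a length-two path in $G_d$, and concatenating these paths connects $p$ to $q$. Thus $G_d$ is connected.

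Finally, to conclude --- and in fact this also yields the statement directly, folding in the contradiction step --- since $c$ uses at least two colors, pick $p,q\in\mathbb{E}^2$ with $c(p)\ne c(q)$, and take a path $p=x_0,x_1,\dots,x_m=q$ in $G_d$ as constructed above. Along this path the color changes at least once, so there is an index $i$ with $c(x_{i-1})\ne c(x_i)$ and $|x_{i-1}x_i|=d$; the pair $\{x_{i-1},x_i\}$ is the desired rainbow segment of length $d$. The only step that is not completely immediate is the connectivity of $G_d$, and that is handled by the elementary observation that two circles of equal radius $d$ meet whenever their centers are within distance $2d$; I do not anticipate any substantial obstacle beyond organizing this chain argument cleanly.
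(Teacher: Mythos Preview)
Your proof is correct. Both your argument and the paper's hinge on the same geometric fact: two circles of radius $d$ whose centers are at most $2d$ apart intersect, so any two points within distance $2d$ are linked through a common point at distance $d$ from each. The organization differs, however. You package this as connectivity of the distance-$d$ graph $G_d$ and then walk along a path to find the color change; the paper instead selects a rainbow segment $CD$ of minimum length $d'$, uses the circle-intersection fact when $d'\le 2d$, and shortens $CD$ by stepping distance $d$ along it when $d'>2d$ to contradict minimality. Your formulation is arguably cleaner, since it sidesteps the need to justify that a rainbow segment of minimum length actually exists (an infimum that is not a priori attained); the paper's version is slightly more concise once that point is granted.
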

\begin{proof}
Assume, to the contrary, that there exists no rainbow segment
of length $d$ in $\mathbb{E}^{2}$.
Note that there exists a rainbow segment $CD$ in $\mathbb{E}^{2}$,
and let $d'$ be its length.
We take such a rainbow segment $CD$ so that
the length $d'$ is as small as possible.
Without loss of generality, assume that $C$ is red and $D$ is blue.
If $d'\leq 2d$,
then we can find a perpendicular bisector of $CD$, say $\ell$.
There exists a point $E$ in $\ell$ such that $|EC| = |ED| = d$.
Then at least one of $EC$ or $ED$ is a rainbow segment of length $d$,
a contradiction.

Suppose $d' > 2d$.
Let $C'$ be the point on the segment $CD$ with distance $d$ from $C$.
To avoid a rainbow segment of length $d$, $C'$ is colored by red.
Then $C'D$ is a rainbow segment of length shorter than $CD$,
contradicting the choice of $CD$.
This completes the proof.
\end{proof}

\begin{proof}[Proof of Theorem \ref{th2-7}]

We prove that for any $r$-coloring of $\mathbb{E}^3$,
there exists a monochromatic rectangle congruent to $T$
or a rainbow rectangle congruent to $T$.
If only one color is used,
then clearly there exists a monochromatic rectangle congruent to $T$.
Thus, we may assume that at least two colors are used,
By Theorem \ref{unitline_thm},
there is a rainbow segment $AB$ of length $\sqrt{3}/2$.
Take an equilateral triangle $ABC$ of side length $\sqrt{3}/2$ that uses $AB$ as one side.
By symmetry, we may assume that $A$ is red and  $B$ is blue.
In addition, we may further assume that $C$ is not blue,
since if $C$ is blue, then we can switch the
roles of $A$ and $B$, and also roles of their colors.

Let $\ell$ be the line such that $\ell$ is perpendicular to the
regular triangle $ABC$ and $\ell$ passes through $A$. Let
$A_1A_2,\ldots A_6$ be a regular hexagon satisfying the following
conditions.
\begin{itemize}
\item[] $(1)$ The center of the regular hexagon $A_1A_2\ldots A_6$ is $A$ and each side is of length $1/2$.
(This implies that $|AA_i| = 1/2$ for $i = 1,2,3,4,5,6$.)
\item[] $(2)$ The regular hexagon $A_1A_2\ldots A_6$ is
perpendicular to the line $AB$. (This implies that the line $AA_i$
for $1 \leq i \leq 6$ is perpendicular to $AB$.)
\item[] $(3)$ Both $A_1$ and $ A_4$ are contained in $\ell$.
(This, together with the choice of $\ell$ implies that the lines
$AA_1$ and $AA_4$ are both perpendicular to $AC$.)
\end{itemize}
To avoid a rainbow triangle congruent to $T$
formed by $A, B$ and $A_i$ for $1 \leq i \leq 6$,
we see that each $A_i$ is either red or blue.
Suppose that $A_1$ and $A_4$ are the same color.
We now assume that $A_1$ and $A_4$ are both red,
but the following argument also works even in the case that $A_1$ and $A_4$ are both blue.
Since the triangle congruent to $T$
formed by $A_1, A_4$ and $A_j$ for $j = 2,3,5,6$ is not monochromatic,
we see that $A_j$ is blue for any $j= 2,3,5,6$,
but then $A_2, A_3$ and $A_5$ form a monochromatic triangle congruent to $T$.

Thus,
we may assume that $A_1$ is red and $A_4$ is blue.
Recall that $C$ is not blue.
Thus, if $C$ is red, then $A,A_1$ and $C$ form
a monochromatic triangle congruent to $T$:
Otherwise, that is, if $C$ is neither red nor blue,
then $A,A_4$ and $C$ form a rainbow triangle congruent to $T$.
This completes the proof of Theorem \ref{th2-7}.
\end{proof}

\section{Monochromatic rectangle versus rainbow rectangle}
%\label{sec:rect}

In this section,
we consider to find a monochromatic rectangle congruent to $Q$
or a monochromatic rectangle congruent to $Q$.
in an $r$-coloring of an Euclidean space,
where $Q$ is a given rectangle,
and prove Theorems \ref{thm:rectangle} and \ref{thm:lowerbound}.

\subsection{Proof of Theorem \ref{thm:rectangle}}
\label{sec:rect}

In this section,
we prove Theorem \ref{thm:rectangle}.
Recall that
T\'{o}th \cite{Toth1996} proved that
for any positive integer $r$ and any $r$-coloring of $\mathbb{E}^{r^2 + o(r^2)}$,
there exists a monochromatic rectangle congruent to a given rectangle.
More precisely,
he proved the following theorem.
\begin{theorem}[T\'{o}th \cite{Toth1996}]
\label{thm:Toth}
For any positive integer $r$,
there exist two integers $n = o(r^2)$ and $m = r^2 + o(r^2)$
such that
for any $r$-coloring of $\mathbb{E}^{n+m}$
and any rectangle $Q$,
there exists a monochromatic rectangle congruent to $Q$.
\end{theorem}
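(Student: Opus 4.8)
The plan is to reduce the geometric statement to a purely combinatorial ``grid Ramsey'' problem and then to solve the latter by a double-counting argument sharp enough to give the leading term $r^2$. Write $Q$ for the rectangle with side lengths $a$ and $b$, and fix two integers $N$ and $M$ (to be chosen at the end) depending only on $r$. First I would place two regular simplices in orthogonal subspaces: let $U=\{u_1,\dots,u_N\}$ be a regular simplex in $\mathbb{E}^{N-1}$ with all pairwise distances equal to $a$, and let $V=\{v_1,\dots,v_M\}$ be a regular simplex in $\mathbb{E}^{M-1}$ with all pairwise distances equal to $b$. Viewing $\mathbb{E}^{(N-1)+(M-1)}$ as the orthogonal product $\mathbb{E}^{N-1}\times\mathbb{E}^{M-1}$, consider the grid of points $(u_i,v_j)$. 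The key observation is that for any $i\neq i'$ and $j\neq j'$ the four points $(u_i,v_j),(u_{i'},v_j),(u_i,v_{j'}),(u_{i'},v_{j'})$ form a rectangle congruent to $Q$: the displacement $u_i-u_{i'}$ of length $a$ lies in the first factor and is therefore orthogonal to the displacement $v_j-v_{j'}$ of length $b$ in the second factor. Since the edge lengths of $Q$ affect only the scale of the simplices and not their dimension, the same $n=N-1$ and $m=M-1$ will serve for every rectangle $Q$. It thus suffices to show that every $r$-coloring $c$ of the $N\times M$ grid $U\times V$ contains a monochromatic \emph{combinatorial rectangle}, namely indices $i\neq i'$, $j\neq j'$ with $c(u_i,v_j)=c(u_{i'},v_j)=c(u_i,v_{j'})=c(u_{i'},v_{j'})$.

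For the combinatorial core I would double count monochromatic pairs within columns. For a fixed column $v_j$ let $n_{j,\gamma}$ be the number of $i$ with $c(u_i,v_j)=\gamma$, so $\sum_\gamma n_{j,\gamma}=N$; by convexity of $\binom{x}{2}$ the number of monochromatic pairs in column $v_j$ satisfies
\[
\sum_{\gamma=1}^{r}\binom{n_{j,\gamma}}{2}\ \geq\ r\binom{N/r}{2}.
\]
Summing over all $M$ columns bounds below the number of triples $(\{i,i'\},\gamma,j)$ for which rows $i,i'$ both receive color $\gamma$ in column $j$. On the other hand, if there were \emph{no} monochromatic combinatorial rectangle, then each pair-and-color $(\{i,i'\},\gamma)$ could occur in at most one column, so the same count is at most $\binom{N}{2}\,r$. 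Comparing the two estimates forces
\[
M\ \leq\ \frac{\binom{N}{2}}{\binom{N/r}{2}}\ =\ r^{2}\cdot\frac{N-1}{\,N-r\,}.
\]
Hence any $M$ exceeding the right-hand side guarantees a monochromatic combinatorial rectangle, and therefore a monochromatic copy of $Q$.

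Finally I would tune the parameters. Choosing $N$ to grow with $r$ while keeping $N=o(r^2)$ (for instance $N\approx r^{3/2}$), the factor $\tfrac{N-1}{N-r}$ tends to $1$, so the threshold for $M$ is $r^{2}+o(r^{2})$; setting $m=M-1=r^{2}+o(r^{2})$ and $n=N-1=o(r^{2})$ recovers exactly the dimensions in the statement. The existence of regular simplices of prescribed edge length and their orthogonal embedding into $\mathbb{E}^{n+m}$ are routine. The step I expect to be the main obstacle is precisely the double-counting inequality: the naive approach, which fixes the majority color in each column and then searches for a repeated monochromatic pair, loses an extra factor of $r$ and yields only $m=O(r^{3})$. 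Obtaining the sharp leading coefficient relies on combining the convexity bound $\sum_\gamma\binom{n_{j,\gamma}}{2}\geq r\binom{N/r}{2}$ with the ``at most one column per pair-and-color'' bound, and on letting $N\to\infty$ (while keeping $N=o(r^2)$) so that the ratio $\binom{N}{2}/\binom{N/r}{2}$ converges to $r^{2}$ rather than to a larger constant multiple of it.
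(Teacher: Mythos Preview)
Your proposal is correct, and the geometric reduction via two regular simplices in orthogonal subspaces is exactly the mechanism the paper describes (it reproduces this step in the proof of Theorem~\ref{thm:rectangle}). Note, however, that the paper does not give its own proof of Theorem~\ref{thm:Toth}: it is quoted from T\'{o}th~\cite{Toth1996}, and the combinatorial core is only recorded as Lemma~\ref{lemma:gridmono} without proof. Your double-counting argument --- Jensen on $\sum_\gamma\binom{n_{j,\gamma}}{2}$ together with the ``each pair-and-color appears in at most one column'' bound --- is the standard proof of that lemma and yields precisely the threshold $M\leq r^{2}\frac{N-1}{N-r}$, so letting $N\to\infty$ with $N=o(r^{2})$ gives $m=r^{2}+o(r^{2})$ as claimed. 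One small point worth making explicit in a final write-up: the convexity bound uses $\binom{x}{2}=\tfrac{x(x-1)}{2}$ as a real function, so $N/r$ need not be an integer, and you need $N>r$ for the denominator $N-r$ to be positive, which your choice $N\approx r^{3/2}$ guarantees.
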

We employ the idea of the proof of Theorem \ref{thm:Toth} in \cite{Toth1996}.
To explain that, we need some terminology.
For two positive integers $n$ and $m$,
the set $\{(i,j) \in \mathbb{E}^2 : 1 \leq i \leq n, \ 1 \leq j \leq m\}$
of points is called the \emph{$(n \times m)$-gird}.
A \emph{rectangle} of the $(n \times m)$-gird
is a configuration consisting of four points $(i,j), (i',j), (i,j')$ and $(i',j')$
for some $1 \leq i < i' \leq n, \ 1 \leq j < j' \leq m$.
Suppose that the points in the $(n \times m)$-gird is $r$-colored where $r \geq 1$.
Then a rectangle in the grid is
said to be \emph{monochromatic}
if the four points forming the rectangle are colored by the same color.
Similarly,
a rectangle is \emph{rainbow}
if the four points forming the rectangle are colored by distinct colors.

In order to prove Theorem \ref{thm:Toth},
T\'{o}th proved the following lemma (though he did not clearly state that).
\begin{lemma}[T\'{o}th \cite{Toth1996}]
\label{lemma:gridmono}
For any positive integer $r$,
there exist two integers $n = o(r^2)$ and $m = r^2 + o(r^2)$
such that
for any $r$-coloring of the $(n \times m)$-grid,
there exists a monochromatic rectangle.
\end{lemma}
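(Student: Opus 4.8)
The plan is to prove Lemma~\ref{lemma:gridmono} by a double-counting argument on \emph{monochromatic vertical pairs}, the key design point being to take the number of rows $n$ somewhat larger than $r$ so that convexity forces every column to contain many such pairs. Concretely I would set $s:=\ceil{\sqrt r}$, $n:=rs$, and $m:=\flr{\,r(rs-1)/(s-1)\,}+1$, and first check the required asymptotics. Since $r=1$ is trivial (the $(2\times 2)$-grid is already monochromatic), we may assume $r\ge 2$, so $s\ge 2$ and the denominator $s-1$ is positive; then $n=rs=O(r^{3/2})=o(r^2)$, and
\[
m\;\le\;\frac{r(rs-1)}{s-1}+1\;=\;r^2+\frac{r(r-1)}{s-1}+1\;=\;r^2+O(r^{3/2})\;=\;r^2+o(r^2).
\]

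Now fix an $r$-coloring of the $(n\times m)$-grid. For a column $j\in\{1,\dots,m\}$, call $(\{i,i'\},c)$ a \emph{monochromatic pair of column $j$} if $1\le i<i'\le n$ and the cells $(i,j)$ and $(i',j)$ both receive color $c$, and let $P_j$ be the set of all monochromatic pairs of column $j$. If $n_{c,j}$ denotes the number of color-$c$ cells in column $j$, then $|P_j|=\sum_{c}\binom{n_{c,j}}{2}$, where $\sum_c n_{c,j}=n=rs$. The first ingredient is a lower bound: since $x\mapsto\binom{x}{2}$ is convex, this sum (over non-negative integers with fixed sum $rs$) is minimized when $n_{c,j}=s$ for all $c$, whence $|P_j|\ge r\binom{s}{2}$ for every column $j$. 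The second ingredient is that if the grid had \emph{no} monochromatic rectangle, then no triple $(\{i,i'\},c)$ could be a monochromatic pair of two distinct columns $j\ne j'$ — its four ``corners'' $(i,j),(i',j),(i,j'),(i',j')$ would form a monochromatic rectangle — so the sets $P_1,\dots,P_m$ would be pairwise disjoint subsets of the universe of all such triples, which has size $\binom{n}{2}\cdot r=\binom{rs}{2}r$.

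Combining the two ingredients gives $m\cdot r\binom{s}{2}\le\sum_{j}|P_j|\le\binom{rs}{2}r$, which simplifies to $m(s-1)\le r(rs-1)$, i.e.\ $m\le r(rs-1)/(s-1)$ — contradicting the choice of $m$. Hence every $r$-coloring of the $(n\times m)$-grid contains a monochromatic rectangle, proving the lemma.

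The counting here is routine; the only real obstacle — and the place where a naive attempt fails — is the choice of $n$. Taking $n=r+1$ only guarantees one monochromatic vertical pair per column, forcing $m=\Theta(r^3)$; taking $n=rs$ raises the per-column guarantee to $\Theta(rs^2)$ while inflating the universe only to $\Theta(r^2s^2)$, so the resulting bound on $m$ drops to $r^2+O(r^2/s)$. Any $s$ with $s\to\infty$ and $s=o(r)$ then simultaneously yields $n=o(r^2)$ and $m-r^2=o(r^2)$, and $s=\ceil{\sqrt r}$ is one convenient choice; this is in essence T\'{o}th's argument.
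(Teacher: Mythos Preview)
Your argument is correct. Note, however, that the paper does \emph{not} supply its own proof of Lemma~\ref{lemma:gridmono}: the lemma is stated with attribution to T\'{o}th~\cite{Toth1996} and used as a black box, so there is no in-paper proof to compare against. What you have written is the standard convexity/double-counting argument (monochromatic vertical pairs counted column-by-column versus the global pool of $r\binom{n}{2}$ available triples), with the key observation that choosing $n=rs$ with $s\to\infty$, $s=o(r)$ balances the two counts to give $m=r^2+O(r^2/s)$; your specific choice $s=\ceil{\sqrt r}$ yields $n=O(r^{3/2})$ and $m=r^2+O(r^{3/2})$, which meets the stated asymptotics. The algebra $r(rs-1)/(s-1)=r^2+r(r-1)/(s-1)$ and the handling of the $r=1$ edge case are both fine. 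As you note, this is essentially T\'{o}th's argument.
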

To adapt the idea by T\'{o}th,
we will prove the following lemma.
\begin{lemma}
\label{lemma:grid}
For any positive integer $r$
and for any $r$-coloring of the $\left((2r+5) \times (11r+1)\right)$-grid,
there exists a monochromatic rectangle
or a rainbow rectangle.
\end{lemma}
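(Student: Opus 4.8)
The plan is to follow the column-by-column counting argument that underlies Lemma~\ref{lemma:gridmono}, but to track two kinds of "bad" events simultaneously: a repeated color-pair in a column (which yields a monochromatic rectangle) and a column that uses at least three distinct colors in a controlled pattern (which will help produce a rainbow rectangle). Concretely, consider the $\left((2r+5)\times(11r+1)\right)$-grid and think of each of the $11r+1$ columns as a vector in $\{1,\dots,r\}^{2r+5}$. Suppose for contradiction that there is neither a monochromatic nor a rainbow rectangle. First I would handle columns that are "nearly monochromatic": if some column uses only one color on all $2r+5$ of its entries, then since any two rows $i<i'$ give a pair of same-colored points, any second column agreeing with it in two rows on that color would finish a monochromatic rectangle; a pigeonhole bound on how many columns can be monochromatic in each color follows. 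The main work is the columns that use at least two colors.

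For a column that uses at least two colors, pick two rows $i<i'$ whose entries are \emph{different} colors, say $\{a,b\}$ with $a\ne b$. Across the $11r+1$ columns, assign to each such column one such "witness pair of rows together with the unordered color pair" $(\{i,i'\},\{a,b\})$. The number of unordered row-pairs is $\binom{2r+5}{2}$ and the number of unordered color-pairs is $\binom{r}{2}$, so if two columns shared the same witness $(\{i,i'\},\{a,b\})$ we would be close to a forbidden rectangle — but we must be careful about \emph{which} color sits in which row, since a genuine monochromatic or rainbow rectangle needs more than an unordered match. The clean way around this is the standard trick from Toth's proof: in each column record, for \emph{every} color $c$ that appears, the \emph{set} of rows colored $c$; two columns that have the same color appearing in the same two rows give a monochromatic rectangle. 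This bounds, for each color $c$, the number of columns in which $c$ appears in $\ge 2$ rows. Since there are only $2r+5$ rows, in any column all but at most $r$ colors can appear at most once, and I would use this to force, in most columns, some color to appear at least twice.

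The rainbow part enters as follows: if a column has at least $4$ rows carrying $4$ \emph{distinct} colors, then matching that same $4$-tuple of (row, color) incidences in another column yields a rainbow rectangle, so again a pigeonhole bound caps the number of such columns (there are at most $\binom{2r+5}{4}\binom{r}{4}\cdot(\text{small})$ of them before a repeat forces a rainbow rectangle — and crucially a single repeat suffices). The numerology $2r+5$ and $11r+1$ is then chosen precisely so that: (number of columns that can be monochromatic) $+$ (number that can avoid a repeated color-in-two-rows) $+$ (number that can carry four distinct colors without a rainbow repeat) $< 11r+1$, a contradiction. I expect the delicate point to be the middle case — a column in which no color occurs in two rows, i.e.\ all $2r+5$ entries are distinct colors; this is impossible once $2r+5>r$, i.e.\ always, so in fact \emph{every} column has some color repeated in $\ge 2$ rows, and then the only freedom left is the at most $r$ "multiplicity-$\ge 2$" colors per column combined with whether four distinct colors appear; balancing the two pigeonhole bounds $\binom{2r+5}{2}\binom{r}{2}$-type term against $\binom{2r+5}{4}$-type term to land under $11r+1$ is the real obstacle, and getting the constants $2r+5,11r+1$ to work will require the careful bookkeeping of exactly how many columns each forbidden pattern can contribute.
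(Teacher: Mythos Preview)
Your rainbow argument is backwards. You claim that two columns sharing ``the same 4-tuple of (row, color) incidences'' yield a rainbow rectangle, but if columns $j,j'$ both have color $c_k$ in row $i_k$, then any rectangle on two of those rows and on $j,j'$ carries two \emph{pairs of equal} colors, never four distinct ones. A rainbow rectangle arises when the color-pairs of two columns on a fixed row-pair $\{i,i'\}$ are \emph{disjoint}, not identical, so no pigeonhole on repeated patterns can ever produce one. The numerics also collapse: every bound you write down---$\binom{2r+5}{2}\binom{r}{2}$, $\binom{2r+5}{4}\binom{r}{4}$---is polynomial of degree $\ge 4$ in $r$, while there are only $11r+1$ columns, so the inequalities go the wrong way. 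Indeed, linearly many columns cannot force a monochromatic rectangle by counting alone (Lemma~\ref{lemma:gridmono} already needs $r^2+o(r^2)$), so the proof must extract genuine structure from the no-rainbow hypothesis rather than just count forbidden patterns.

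The paper does exactly that, working along rows rather than columns. The no-rainbow assumption is exploited, through a case analysis on how many colors each row $i\ge 2$ uses on the columns where row~$1$ carries its dominant color, to show that in \emph{every} row some single color occupies at least $10r+1$ of the $11r+1$ entries. Once that is established, pigeonhole on the $2r+5>r$ rows gives two rows with the same dominant color, and their dominant sets overlap in at least $2(10r+1)-(11r+1)=9r+1\ge 2$ columns, producing a monochromatic rectangle. The step you are missing is precisely this structural one: rainbow-freeness forces each row to be nearly monochromatic, and that---not a column-pattern pigeonhole---is why $11r+1$ columns suffice.
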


\noindent
\textbf{Remark:}
Lemmas \ref{lemma:gridmono} and \ref{lemma:grid} are
related to \emph{bipartite Ramsey theory} and \emph{bipartite Gallai-Ramsey theory},
respectively.
Consider the bijection that maps the point $(i,j)$ of the $(n \times m)$-grid
to the edge connecting $i$th vertex in one bipartite set
and $j$th vertex in the other bipartite set of the complete bipartite graph $K_{n,m}$.
By this bijection,
Lemma \ref{lemma:gridmono} is shown to be equivalent to the statement that
for any $r$-coloring of the edges of
the complete bipartite graph $K_{n, m}$,
where $n$ and $m$ are some integers with $n  =o(r^2)$ and $m = r + o(r^2)$,
there exists a monochromatic subgraph isomorphic to $K_{2,2}$.
With this context,
Lemma \ref{lemma:gridmono} has been extended
in several direction \cite{BLS19, Carnielli99, HH98, HJ14, WLL21},
where such a study is called bipartite Ramsey theory.
Similarly,
Lemma \ref{lemma:grid} is equivalent to the statement that
for any $r$-coloring of the edges of the complete bipartite graph $K_{2r+5, 11r+1}$,
there exists a monochromatic subgraph isomorphic to $K_{2,2}$
or a rainbow subgraph isomorphic to $K_{2,2}$.
Some results on bipartite Gallai-Ramsey numbers can be found in \cite{LWL19}.\\

Before proving  Lemma \ref{lemma:grid},
we show Theorem \ref{thm:rectangle} assuming Lemma \ref{lemma:grid}.
Note that the proof of Theorem \ref{thm:rectangle} is almost same as the proof by T\'{o}th \cite{Toth1996},
while we need new ideas to prove Lemma \ref{lemma:grid}.

\begin{proof}[Proof of Theorem \ref{thm:rectangle}]
Let $Q$ be a given rectangle,
and let $a$ and $b$ be the two sides of $Q$.
For a fixed $r$, we consider an $r$-coloring of $\mathbb{E}^{13r+4}$.
Let $S$ and $P$ be two complementary orthogonal subspaces of $\mathbb{E}^{13r+4}$
such that their dimension is $2r+4$ and $11r$, respectively,
and let $A_{1,1}$ denote the origin point,
which is the intersection of $S$ and $P$.
Let $A_{1,1}A_{2,1}\ldots A_{2r+5,1}$ be a regular simplex of side $a$ in $S$,
and let $M_1=A_{1,1}A_{1,2}\ldots A_{1,11r+1}$ be a regular simplex of side $b$ in $P$.
For any $1<i \leq 2r+5$ and $1 < j\leq 11r+1$,
define the point $A_{i,j}$ as the image of $A_{1,j}$ under a
translation of $M_1$ taking $A_{1,1}$ into $A_{i,1}$.

We can regard the point $A_{i,j}$ of $\mathbb{E}^{13r+4}$
with $1 \leq i \leq 2r+5$ and $1 \leq j\leq 11r+1$
as the point $(i,j)$ of the $\left((2r+5) \times (11r+1)\right)$-grid,
which is $r$-colored by the $r$-coloring of $\mathbb{E}^{13r+4}$.
By Lemma \ref{lemma:grid},
there exists a monochromatic rectangle
or a rainbow rectangle,
and let $(i,j), (i',j), (i,j')$ and $(i',j')$
be the four points forming the rectangle.
By the construction,
the lines $A_{i,j}A_{i',j}$ and $A_{i,j'}A_{i',j'}$ are of length $a$,
the lines $A_{i,j}A_{i,j'}$ and $A_{i',j}A_{i',j'}$ are of length $b$,
and
the two lines $A_{i,j}A_{i,j'}$ and $A_{i,j}A_{i',j}$ cross at right angle.
Therefore,
the four points $A_{i,j}, A_{i',j}, A_{i,j'}$ and $A_{i',j'}$
form a monochromatic rectangle congruent to $Q$
or a rainbow rectangle congruent to $Q$.
\end{proof}

\begin{proof}[Proof of Lemma \ref{lemma:grid}]
We only prove the case $r \geq 4$,
since the case when $r \leq 3$ was done in \cite{Toth1996}.

Suppose contrary that
there is neither a monochromatic rectangle nor a rainbow rectangle.
We use the set $\{1,2, \dots , r\}$ for the colors of the $r$-coloring
of the $\left((2r+5) \times (11r+1)\right)$-grid.
For $1 \leq t \leq r$,
let
$$B_{1}^t = \{(1,j) \,|\, 1 \leq j \leq 11r+1, \ \text{$(1,j)$ is colored by $t$}\}.$$
We may assume $|B_{1}^1|\geq |B_{1}^2|\geq \dots \geq |B_{1}^r|$.
Since $\sum_{t=1}^r|B_{1}^t| = 11r+1$, it follows that $|B_{1}^1|\geq 12$.
For each $i$ with $2 \leq i \leq 2r+5$
and each $t$ with $1 \leq t \leq r$,
let
$$B_{i}^t = \{(i,j) \,|\, 1 \leq j \leq 11r+1, \ (1,j) \in B_1^t\}.$$
To avoid a monochromatic rectangle in $B_1^1 \cup B_i^1$ with $2 \leq i \leq 2r+5$,
there are at most one point with color $1$ in $B_{i}^1$.
Let $s$ be the number of point sets $B_{i}^1$
with $2\leq i \leq 2r+5$
such that there are at least $3$ colors except
the color $1$ on the points of $B_{i}^1$.
We first prove the following claim.

\begin{claim}\label{Claim3}
$s\geq 5$.
\end{claim}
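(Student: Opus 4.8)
The plan is to argue by contradiction: assume $s \leq 4$ and derive a configuration forbidden by the no-monochromatic-rectangle hypothesis (the no-rainbow hypothesis will not even be needed here). Write $L := |B_1^1|$, so that $L \geq 12$ and $|B_i^1| = L$ for every $i$ with $2 \leq i \leq 2r+5$. Call such a row $i$ \emph{good} if the points of $B_i^1$ use at most two colours from $\{2,\dots,r\}$; by the definition of $s$ there are at least $(2r+4) - s \geq 2r$ good rows.

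First I would extract a dominant colour from each good row. As already noted, $B_i^1$ contains at most one point of colour $1$ (else a monochromatic rectangle appears together with $B_1^1$), so a good row has at least $L-1$ points spread over at most two colours in $\{2,\dots,r\}$; hence some colour $c_i \in \{2,\dots,r\}$ occurs on at least $\lceil (L-1)/2 \rceil \geq (L-1)/2$ points of $B_i^1$. There are at least $2r$ good rows and only $r-1$ available colours, and $2r > 2(r-1)$, so some colour $c$ is the dominant colour of at least three good rows $i_1, i_2, i_3$.

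The key step is a union bound inside the column set $J_1 := \{\, j : (1,j) \in B_1^1 \,\}$, which has size $L$. For $k = 1,2,3$ let $D_k \subseteq J_1$ be the set of columns on which row $i_k$ has colour $c$; then $|D_k| \geq (L-1)/2$. If two of these sets met in two distinct columns $j, j'$, the four points $(i_k,j), (i_k,j'), (i_l,j), (i_l,j')$ would form a monochromatic rectangle, so $|D_k \cap D_l| \leq 1$ for $k \neq l$. Inclusion--exclusion then gives
\[
L \;\geq\; |D_1 \cup D_2 \cup D_3| \;\geq\; \sum_{k} |D_k| - \sum_{k<l} |D_k \cap D_l| \;\geq\; \tfrac{3(L-1)}{2} - 3 ,
\]
which forces $L \leq 9$, contradicting $L \geq 12$. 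This completes the proof of the claim.

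I expect the main obstacle to be arranging the bookkeeping so that the contradiction is self-contained: one must observe that every $B_i^1$ has the \emph{same} size $L$, so both the lower bound on the dominant colour and the ambient set $J_1$ are governed by $L$, and the final inequality collapses. A secondary point worth checking is that $2r$ good rows genuinely force \emph{three} (not merely two) rows with a common dominant colour — two rows would only yield the vacuous bound $L \geq L - 2$ — which is precisely where the slack in the choice of $2r+5$ rows is spent.
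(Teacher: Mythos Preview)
Your argument is correct and in fact cleaner than the paper's. The paper proceeds by a fairly intricate case analysis: it classifies the ``good'' rows $B_i^1$ by their colour types $[t]$ or $[t,t']$, proves a series of subclaims bounding how many rows can share a type (using both the no-monochromatic and the no-rainbow hypotheses, the latter crucially in Subclaim~\ref{Subclaim1}), and then counts types against the $2r$ available rows in two separate cases. Your pigeonhole-plus-union-bound argument sidesteps all of this: extracting a dominant colour from each good row and noting that three rows with the same dominant colour already force $L \leq 9$ via $|D_k \cap D_l| \leq 1$ and inclusion--exclusion. The gain is substantial --- you avoid the no-rainbow hypothesis entirely for this claim and replace roughly a page of subclaims with a two-line inequality --- while the paper's type classification, though more laborious here, is closer in spirit to the structural arguments used elsewhere in bipartite Gallai--Ramsey theory.
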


\begin{proof}
Assume, to the contrary, that $s \leq 4$.
Then there exist $2r$ sets,
without loss of generality, say $B_{2}^1,B_{3}^1,\dots,B_{2r+1}^1$,
such that there are at most $2$ colors except the color $1$ on the
points of $B_{i}^1$.
Let $\mathcal{B}=\{B_{i}^1\,|\, 2 \leq i\leq 2r+1\}$.
We have the following subclaim.

\begin{subclaim}\label{Subclaim1}
There are no two point sets $B_{a}^1,B_{b}^1 \in \mathcal{B}$
such that each of them has two colors except the color $1$,
and the union $B_{a}^1 \cup B_{b}^1$ has four colors except $1$.
\end{subclaim}
\begin{proof}
Suppose contrary that such two point sets $B_{a}^1$ and $B_{b}^1$ exist.
For simplicity,
we let the two colors on the points of $B_{a}^1$ except the color $1$ are $2$ and $3$,
and those for $B_{b}^1$ are $4$ and $5$.
Let $X_a,Y_a$ be the set of points with colors $2,3$ in $B_{a}^1$, respectively,
let $X_b = \{(b,j) \,|\, 1 \leq j \leq 11r+1, \ (a,j) \in X_{a}\}$,
and let $Y_b = \{(b,j) \,|\, 1 \leq j \leq 11r+1, \ (a,j) \in Y_{a}\}$.
Since there is at most one point with color $1$ in $B_{a}^1$,
it follows that $|B_{a}^1|=|X_a|+|Y_a|$ or $|B_{a}^1|=|X_a|+|Y_a|+1$,
where $|X_a|\geq 1$ and $|Y_a|\geq 1$.
Then there exists one point, say $(b,j)$, in $X_b \cup Y_b$,
such that $(b,j)$ is colored by either $4$ or $5$,
say $4$ without loss of generality.
By symmetry,
we assume that $(a,j) \in X_a$,
that is, $(a,j)$ is colored by $2$.
To avoid a rainbow rectangle of colors $2,3,4$ and $5$,
all points in $Y_b$ must be colored by $4$.
To avoid a rainbow rectangle of colors $2,3,4$ and $5$,
all points in $X_b$ must be colored by $4$.
If $|B_{a}^1|=|X_a|+|Y_a|$,
then all points in $B_{b}^1$ are colored by $4$,
which contradicts to the fact that $B_{b}^1$ has two colors except $1$.
If $|B_{a}^1|=|X_a|+|Y_a|+1$,
then there is an integer $j'$ with $1 \leq j' \leq 11r+1$
such that $(a,j') \in B_{a}^1$ is colored by $1$
and $(b,j')$ is colored by $5$,
which together with $(a,j)$ and $(b,j)$ form a rainbow rectangle of colors $1,2,4$ and $5$,
 a contradiction.
\end{proof}

For $2 \leq t \leq r$,
we say that a point set $B_{i}^1 \in \mathcal{B}$ is of \emph{type $[t]$}
if $B_{i}^1$ contains only the color $t$ except the color $1$.
Since all points in $B_{1}^1$ are colored by $1$,
the possible types are $[2],[3],\dots,[r]$.
Let $\mathcal{S}=\{[t]\,|\,2\leq t\leq r,
\text{ there exists a point set in $\mathcal{B}$ of type $[t]$}
\}$.

\begin{subclaim}\label{Subclaim2}
For each $t$ with $2\leq t \leq r$,
there is at most one point set in $\mathcal{B}$
of type $[t]$.
\end{subclaim}
\begin{proof}
Assume, to the contrary, that there are two point sets in $\mathcal{B}$
of type $[t]$, say $B_{a}^1$ and $B_{b}^1$.
Recall that $|B_{a}^1|=|B_{b}^1|=|B_{1}^1|\geq 12$.
Then, there exist two points $(a,j)$ and $(a,j')$ in $B_{a}^1$ with color $t$
such that
$(b,j)$ and $(b,j')$ are also colored by $t$,
and hence there is a monochromatic rectangle with color $t$,
a contradiction.
\end{proof}

For $2\leq t<t'\leq r$,
we say that a point set $B_{i}^1 \in \mathcal{B}$ is of \emph{type $[t,t']$}
if $B_{i}^1$ contains only the colors $t$ and $t'$ except the color $1$.
Let $\mathcal{T}=\{[t,t'] \,|\,2\leq t<t'\leq r,
\text{ there exists a point set in $\mathcal{B}$ of type $[t,t'])$}
\}$.

\begin{subclaim}\label{Subclaim3}
For $2\leq t<t'\leq r$,
there are at most two point sets in $\mathcal{B}$
of type $[t,t']$.
\end{subclaim}
\begin{proof}
Assume, to the contrary, that
there are at least three point sets in $\mathcal{B}$
of type $[t,t']$,
say $B_{a}^1,B_{b}^1$ and $B_{c}^1$.
Since
$|B_{a}^1| = |B_{1}^1|\geq 12$,
there exist at least six points in $B_{a}^1$,
say $(a,j_1), \dots , (a,j_6)$ with color $t$.
To avoid a monochromatic rectangle of color $t$,
there is at most one point in $\{(b,j_k) \,|\, 1\leq k\leq 6\}$ with color $t$.
Since there is at most one point in $\{(b,j_k) \,|\,1\leq k\leq 6\}$ with color $1$,
at least four points in $\{(b,j_k) \,|\,1\leq k\leq 6\}$ are colored by $t'$.
The same is true for the points in $\{(c,j_k) \,|\,1\leq k\leq 6\}$.
So there exist two points $(b,j_k)$ and $(b,j_{k'})$
in $\{(b,j_k) \,|\,1\leq k\leq 6\}$ with color $t'$
such that
the two points $(c,j_k), (c,j_{k'})$ are also colored by $t'$,
which form a monochromatic rectangle,
a contradiction.
\end{proof}

If $\mathcal{T}=\emptyset$,
then each point set in $\mathcal{B}$ is of type in $\mathcal{S}$.
Since $|\mathcal{B}|=2r$ and $|\mathcal{S}| \leq r-1$,
there exist two point sets in $\mathcal{B}$ of the same type,
but this contradicts Subclaim \ref{Subclaim2}.
Thus, without loss of generality, we may assume $[2,3]\in \mathcal{T}$.
We have the following subclaims.

\begin{subclaim}\label{Subclaim5}
We have $[2] \notin \mathcal{S}$
or $[3] \notin \mathcal{S}$.

\end{subclaim}
\begin{proof}
Since $[2,3] \in \mathcal{T}$,
there exists a point set $B_{i}^1$ in $\mathcal{B}$ of type $[2,3]$.
By symmetry, we may assume that the number of points in $B_{i}^1$ with color $2$
is large than the number of points in $B_{i}^1$ with color $3$.
If there exists a point set $B_{i'}^1$ in $\mathcal{B}$ of type $[2]$,
then there exists a monochromatic rectangle in $B_{i}^1 \cup B_{i'}^1$ with color $2$,
a contradiction.
\end{proof}

\begin{subclaim}\label{Subclaim10}
For $4 \leq t \leq r$,
there are at most two point sets in $\mathcal{B}$ of type
either $[t]$ or $[2,t]$
and
there are at most two point sets in $\mathcal{B}$ of type
either $[t]$ or $[3,t]$.
\end{subclaim}
\begin{proof}
Assume, to the contrary, that
for some color $t$ with $4 \leq t \leq r$,
there are three point sets in $\mathcal{B}$
of type either $[t]$ or $[2,t]$,
say $B_{a}^1,B_{b}^1$ and $B_{c}^1$.
By Subclaims \ref{Subclaim2} and \ref{Subclaim3} and the symmetry,
we may assume that $B_{a}^1,B_{b}^1$ and $B_{c}^1$ are of type $[t], [2,t]$ and $[2,t]$, respectively.
To avoid a monochromatic rectangle with color $t$ in $B_{a}^1 \cup B_{b}^1$,
there are at most two points in $B_{b}^1$ with color $t$.
Since $B_{b}^1$ contains at most one point with color $1$,
$B_{b}^1$ contains at least $|B_{b}^1| - 3$ points with color $2$.
Similarly,
$B_{c}^1$ contains at least $|B_{c}^1| - 3$ points with color $2$.
Since $|B_{b}^1| = |B_{c}^1| = |B_{1}^1| \geq 12$,
there are at least two points $(b,j), (b,j') \in B_{b}^1$ with color $2$
such that
$(c,j), (c,j')$ are also colored by $2$.
Then, those four points form a monochromatic rectangle with color $2$,
a contradiction.
This shows that
there are at most two point sets in $\mathcal{B}$ of type
either $[t]$ or $[2,t]$.
By symmetry,
we can obtain the second statement.
\end{proof}

For any $[t,t'] \in \mathcal{T}$, we have either $t=2$ or $t=3$,
since otherwise
the point set of type $[t,t']$ and
the point set of type $[2,3]$ contradicts Subclaim \ref{Subclaim1}.
Let $\mathcal{T}_2=\{[2,t] \in \mathcal{T} \,|\,4\leq t\leq r\}$
and $\mathcal{T}_3=\{[3,t] \in \mathcal{T} \,|\,4\leq t\leq r\}$.
By Subclaim \ref{Subclaim10},
$\mathcal{T}=\{[2,3]\} \cup \mathcal{T}_2\cup \mathcal{T}_3$.
We divide the rest of the proof into two cases
according to whether $\mathcal{T}_2\neq \emptyset$ and $\mathcal{T}_3\neq \emptyset$.
\\

\noindent
\textbf{Case (a): $\mathcal{T}_2\neq \emptyset$ and $\mathcal{T}_3\neq \emptyset$.}

Let $[2,t_0] \in \mathcal{T}_2$.
By Subclaim \ref{Subclaim1},
$[3,t] \notin \mathcal{T}_3$ for any $t \neq t_0$,
and hence
$[3,t_0]$ is the only element in $\mathcal{T}_3$.
Similarly,
we see that $[2,t_0]$ is the only element in $\mathcal{T}_2$.
Therefore,
$\mathcal{T}_2 = \{[2,t_0]\}$ and $\mathcal{T}_3 = \{[3,t_0]\}$.
By Subclaim \ref{Subclaim10},
there are at most four point sets in $\mathcal{B}$ of type either $[t_0], [2,t_0]$ or $[3,t_0]$.
By Subclaim \ref{Subclaim3}, there are at most two point sets in $\mathcal{B}$ of type $[2,3]$.
ws that
By Subclaims \ref{Subclaim2} and \ref{Subclaim5},
there exist at most one point set in $\mathcal{B}$
of type each of $\mathcal{S} \setminus \{[t_0]\}$
and $|\mathcal{S} \setminus \{[t_0]\}| \leq r-3$.
Since the types of point sets in $\mathcal{B}$ in this case are
contained in $\mathcal{S} \cup \{[2,3], [2,t_0], [3,t_0]\}$.
Therefore,
the number of point sets in $\mathcal{B}$ is at most $4+2+(r-3)=r+3$,
which is less than $2r=|\mathcal{B}|$ since $r \geq 4$,
a contradiction .
\\

\noindent
\textbf{Case (b): Either $\mathcal{T}_2 = \emptyset$ or $\mathcal{T}_3 = \emptyset$.}

By symmetry,
we may assume $\mathcal{T}_3 = \emptyset$.
By Subclaim \ref{Subclaim10}, there are at most $2(r-3)$ point sets in $\mathcal{B}$
of type in $\{[2,t] \,|\,4\leq t\leq r\}\cup \{[t]\,|\,4\leq t\leq r\}$.
By Subclaim \ref{Subclaim3}, there are at most two point sets in $\mathcal{B}$ of type $[2,3]$.
By Subclaims \ref{Subclaim2} and \ref{Subclaim5},
there is at most one point set in $\mathcal{B}$ of type either $[2]$ or $[3]$.
Thus, the number of point sets in $\mathcal{B}$ is
at most $2(r-3)+2+1=2r-3<2r = |\mathcal{B}|$, a contradiction.
This completes the proof of Claim \ref{Claim3}.
\end{proof}

By Claim \ref{Claim3}, there are at least five point sets
in $\{B_{i}^1 \,|\, 2\leq i\leq 2r+5\}$, say
$B_{2}^1,B_{3}^1, \dots , B_{6}^1$ by symmetry,
such that there are at least $3$ colors except
the color $1$ on the points of $B_{k}^1$ with $2\leq k \leq 6$.
Using these point sets,
we next check the size of $B_{1}^2, \dots , B_{1}^r$.

\begin{claim}\label{Claim4}
$|B_{1}^2|\leq 2$.
\end{claim}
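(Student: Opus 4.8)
The plan is to argue by contradiction. Suppose $|B_{1}^2| \geq 3$ and fix three columns $j_1, j_2, j_3$ with $(1,j_1), (1,j_2), (1,j_3)$ all colored by $2$. Throughout I write $c(P)$ for the color of a point $P$, and I will use the five rows $B_{2}^1, B_{3}^1, \dots, B_{6}^1$ supplied by Claim~\ref{Claim3}, each of which carries at least three colors other than $1$.

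The first and main step is to show that for every $k$ with $2 \leq k \leq 6$ and every $j \in B_{1}^2$, the point $(k,j)$ is colored by $1$ or $2$. Suppose not, say $c(k,j) = \gamma \notin \{1,2\}$. For every $x \in B_{1}^1$ for which $(k,x)$ is colored outside $\{1,2\}$, the rectangle with corners $(1,j),(1,x),(k,j),(k,x)$ has its four colors among $\{1, 2, \gamma, c(k,x)\}$ with both $\gamma$ and $c(k,x)$ outside $\{1,2\}$; since this rectangle is not rainbow, we must have $c(k,x) = \gamma$. Hence every point of $B_{k}^1$ is colored $1$, $2$, or $\gamma$, so $B_{k}^1$ uses at most two colors other than $1$, contradicting the defining property of $B_{k}^1$ coming from Claim~\ref{Claim3}.

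To finish, note that for each $k$ with $2 \leq k \leq 6$ at most one of $(k,j_1),(k,j_2),(k,j_3)$ is colored by $2$, since two such points together with the corresponding points of row $1$ would form a monochromatic rectangle of color $2$. Combined with the previous step, at least two of $(k,j_1),(k,j_2),(k,j_3)$ are colored by $1$, i.e.\ each of the five rows has color $1$ at both columns of some pair from $\{j_1,j_2,j_3\}$. There are only $\binom{3}{2}=3$ such pairs, so by pigeonhole two of the rows, say rows $p$ and $q$, have color $1$ at both columns of a common pair $\{j_a,j_b\}$; then $(p,j_a),(p,j_b),(q,j_a),(q,j_b)$ is a monochromatic rectangle of color $1$, a contradiction. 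Hence $|B_{1}^2|\leq 2$. I expect the first step to be the crux: turning the ``at least three colors other than $1$'' property of the rows from Claim~\ref{Claim3} into the strong restriction that their $B_{1}^2$-columns avoid all colors outside $\{1,2\}$; the rest is a short pigeonhole count that uses only the no-monochromatic-rectangle assumption.
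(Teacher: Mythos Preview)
Your proof is correct and follows essentially the same approach as the paper: both use Claim~\ref{Claim3} to show that for $2\le k\le 6$ the three points $(k,j_1),(k,j_2),(k,j_3)$ can only be colored $1$ or $2$ (via the same rainbow-rectangle argument with a suitable point of $B_k^1$), and then finish with a pigeonhole over the five rows. Your pigeonhole on the $\binom{3}{2}=3$ monochromatic-$1$ pairs is a minor rephrasing of the paper's pigeonhole on the at most four possible color patterns of the triple.
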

\begin{proof}
Assume, to the contrary, that $|B_{1}^2|\geq 3$.
Let $(1,a), (1,b), (1,c) \in B_{1}^2$.
Recall that the points $(1,a), (1,b)$ and $(1,c)$ are colored by $2$.
To avoid a monochromatic rectangle with color $2$,
for $2\leq k \leq 6$,
there is at most one point in $(k,a), (k,b)$ and $(k,c)$ with color $2$,
and
the others are all colored by $1$
in order to avoid a rainbow rectangle formed by a point in $B_{1}^1$,
a point in $B_{k}^1$,
a point in $\{(k,a), (k,b), (k,c)\}$,
and a point in $\{(1,a), (1,b), (1,c)\}$.
Then, there exist two integers $k$ and $k'$ with $2 \leq k < k' \leq 6$
such that the colors of
$(k,a),(k,b)$ and $(k,c)$
coincide with those of $(k',a),(k',b)$ and $(k',c)$,
respectively.
Hence, there is a monochromatic rectangle colored by $1$
with two points in $\{(k,a),(k,b), (k,c)\}$
and
two points in $\{(k',a),(k',b), (k',c)\}$,
a contradiction.
\end{proof}

\begin{claim}\label{Claim5}
$|B_{1}^3|\leq 1$.
\end{claim}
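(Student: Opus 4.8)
The plan is to argue by contradiction, mirroring the proof of Claim~\ref{Claim4}. Suppose $|B_1^3|\geq 2$. Since the colors are ordered so that $|B_1^2|\geq|B_1^3|$, and Claim~\ref{Claim4} gives $|B_1^2|\leq 2$, we must have $|B_1^2|=|B_1^3|=2$; let $\{a,b\}$ be the two columns with $(1,a),(1,b)$ colored by $3$ and $\{c,d\}$ the two columns with $(1,c),(1,d)$ colored by $2$, so that $a,b,c,d$ are four distinct columns. As in the previous claims I will only use the five good rows $2,3,\dots,6$ from Claim~\ref{Claim3}, on each of which $B_k^1$ carries at least three colors other than~$1$.

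First I would determine the possible colors of the cells in columns $a,b,c,d$ within a good row $k$. If $(k,a)$ were colored by some $c'\notin\{1,3\}$, then, since $B_k^1$ uses at least three colors besides~$1$, one can choose a column $j$ with $(1,j)\in B_1^1$ and $(k,j)$ colored by a color $c''\notin\{1,3,c'\}$; the cells $(1,a),(1,j),(k,a),(k,j)$, colored $3,1,c',c''$, then form a rainbow rectangle, a contradiction. Hence $(k,a),(k,b)\in\{1,3\}$, and they are not both $3$ (otherwise $(1,a),(1,b),(k,a),(k,b)$ is a monochromatic rectangle of color~$3$), so at least one of $(k,a),(k,b)$ is colored by~$1$. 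Running the same argument with $2$ in place of $3$ gives $(k,c),(k,d)\in\{1,2\}$, not both $2$, so at least one of $(k,c),(k,d)$ is colored by~$1$.

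Then I would finish with a pigeonhole over the five good rows. For each $k\in\{2,\dots,6\}$ pick $f(k)\in\{a,b\}$ with $(k,f(k))$ colored by~$1$ and $g(k)\in\{c,d\}$ with $(k,g(k))$ colored by~$1$. If two distinct good rows $k,k'$ satisfied $(f(k),g(k))=(f(k'),g(k'))$, then the four cells lying in rows $k,k'$ and columns $f(k),g(k)$ would all be colored by~$1$, a monochromatic rectangle; so the five pairs $(f(k),g(k))$ must be pairwise distinct, which is impossible because there are only $2\cdot 2=4$ of them. This contradiction proves $|B_1^3|\leq 1$.

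The main obstacle is making the count $4<5$ actually bite. In Claim~\ref{Claim4} three color-$2$ columns forced two color-$1$ cells in every good row, but a single color-$3$ column only forces one; the remedy is to pit the (exactly two) color-$3$ columns against the (exactly two) color-$2$ columns, so that each good row selects a point of a $2\times 2$ array of ``color-$1$ positions,'' and to observe that choosing an arbitrary color-$1$ representative in each pair already produces the monochromatic rectangle once a selection repeats. It is precisely to know that the color-$2$ columns exist, i.e.\ that $|B_1^2|=2$, that Claim~\ref{Claim4} is invoked.
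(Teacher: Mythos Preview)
Your proof is correct and follows essentially the same approach as the paper: reduce to $|B_1^2|=|B_1^3|=2$ via Claim~\ref{Claim4}, show that in each of the five good rows the cells above the two color-$2$ columns lie in $\{1,2\}$ (not both $2$) and those above the two color-$3$ columns lie in $\{1,3\}$ (not both $3$), then pigeonhole the resulting color-$1$ selections over the $2\times 2$ pairs to force a monochromatic rectangle. Your explicit construction of the rainbow rectangle witnessing why a cell cannot receive a color outside $\{1,t\}$ is a bit more detailed than the paper's, but the logic is identical.
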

\begin{proof}
Assume, to the contrary, that $|B_{1}^3|\geq 2$.
Then $|B_{1}^2| \geq |B_{1}^3| \geq 2$.
By Claim \ref{Claim4}, $|B_{1}^3|=|B_{1}^2|=2$.
For $2 \leq k\leq 6$,
to avoid a monochromatic rectangle with color $2$ in
$B_{1}^2\cup B_{k}^2$,
there is at most one point in $B_{k}^2$ with color $2$,
and the other points are colored by $1$
in order to avoid a rainbow rectangle.
By the same reason, for $2\leq k\leq 6$,
there is at most one point in $B_{k}^3$ with color $3$,
and the other points are colored by $1$.
Then
for some $2 \leq k < k' \leq 6$
and some $(k,a) \in B_{k}^2$ and $(k,b) \in B_{k}^3$
such that
all of the four points
$(k,a), (k,b), (k',a), (k,b)$ are colored by $1$.
Thus,
they form a monochromatic rectangle colored by $1$,
a contradiction.
\end{proof}

By Claims \ref{Claim4} and \ref{Claim5}, we have $|B_{1}^2|\leq 2$
and $|B_{1}^3|\leq 1$. Then $|B_{1}^t|\leq 1$ for $4\leq t\leq r$,
and hence $|B_{1}^1| \geq (11r+1) - r = 10r+1$.
We now rename $C_1 = B_{1}^1$.
Recall that all points of $C_1 = B_{1}^1$ are colored by $1$.
By the symmetry of the first coordinate,
we have the following claim.
\begin{claim}\label{cla}
For each $i$ with $1\leq i \leq 2r+5$,
there exists a set $C_{i} \subseteq \{(i,j)\,|\, 1 \leq j \leq 11r+1\}$
such that $|C_{i}|\geq 10r+1$
and all points in $C_i$ are colored by the same color.
\end{claim}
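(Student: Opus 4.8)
The plan is to observe that the derivation of $|C_1| = |B_{1}^1| \geq 10r+1$, together with the fact that all points of $C_1$ share a single color, which occupies the paragraphs from Claim \ref{Claim3} through Claim \ref{Claim5}, never uses any special feature of the index $1$ among the first coordinates. That whole argument relies only on two things: the dimensions of the grid, namely that there are $11r+1$ columns, at most $r$ colors, and $2r+4$ rows other than the distinguished one; and the fact that, after relabeling the colors so that the color most frequent on the distinguished row becomes color $1$ and $|B_{\bullet}^1| \geq |B_{\bullet}^2| \geq \dots \geq |B_{\bullet}^r|$, this color $1$ occurs at least $\lceil (11r+1)/r \rceil = 12$ times on that row. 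Both of these are invariant under permuting the rows of the grid, so the claim should follow ``by the symmetry of the first coordinate'', exactly as asserted.

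Concretely, I would fix an arbitrary $i_0$ with $1 \leq i_0 \leq 2r+5$, let $c$ be a color occurring on a maximum number of the points $(i_0,j)$ with $1 \leq j \leq 11r+1$, and relabel the colors so that $c$ becomes color $1$; then color $1$ occurs at least $12$ times on row $i_0$. Writing $B_{i_0}^t$ for the set of points of row $i_0$ with color $t$ (reordered so that $|B_{i_0}^1| \geq \dots \geq |B_{i_0}^r|$) and $B_{i}^t = \{(i,j) : (i_0,j) \in B_{i_0}^t\}$ for $i \neq i_0$, I would rerun the proofs of Claims \ref{Claim3}, \ref{Claim4} and \ref{Claim5} verbatim, now with $i_0$ in the role of the first row. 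This gives $|B_{i_0}^2| \leq 2$, $|B_{i_0}^3| \leq 1$, hence $|B_{i_0}^t| \leq 1$ for $4 \leq t \leq r$, and therefore $|B_{i_0}^1| \geq (11r+1) - r = 10r+1$. Setting $C_{i_0} = B_{i_0}^1$ then finishes the claim.

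Since this is a pure symmetry argument, there is no real obstacle; the one thing that genuinely must be checked is that each of Claims \ref{Claim3}--\ref{Claim5}, including every subclaim used in the proof of Claim \ref{Claim3}, is phrased using only the data listed above and never treats ``the first row'' as a distinguished object in a way that relabeling could break. A quick pass confirms this: every appeal there is either to the size bound $|B_{i_0}^1| \geq 12$, to the count of $2r$ (out of $2r+4$) other rows, to the number $r$ of colors, or to the avoidance of monochromatic and rainbow rectangles --- all of which are preserved under an arbitrary permutation of the first coordinate together with an arbitrary relabeling of the colors.
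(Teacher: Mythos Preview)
Your proposal is correct and is exactly the approach the paper takes: the paper simply asserts the claim ``by the symmetry of the first coordinate'', and what you have written is a careful unpacking of precisely that symmetry argument. Nothing more is needed.
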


Since 
there exist $2r+5$ sets $C_1, C_2, \dots , C_{2r+5}$ but only $r$ colors,
there exist two sets $C_a$ and $C_b$
such that all points in $C_a \cup C_{b}$ have the same color,
where $1\leq a < b \leq 2r+5$.
Note that $|C_{a}| \geq 10r+1$ and $|C_{b}| \geq 10r+1$.
Since $C_{a} \subseteq \{(a,j)\,|\, 1 \leq j \leq 11r+1\}$
and $C_{b} \subseteq \{(b,j)\,|\, 1 \leq j \leq 11r+1\}$,
there are at least $9r+1$ points $(a,j)$ in $C_{a}$
with $(b,j) \in C_b$.
Then,
there is a monochromatic rectangle in $C_a \cup C_b$,
a contradiction.
\end{proof}

\subsection{Proof of Theorem \ref{thm:lowerbound}}
\label{sec:lowerbound}

We prove Theorem \ref{thm:lowerbound},
which states that for a positive integer $r \geq 3$
and a rectangle $Q$ with side length $a$ and $b$ with $a \leq b \leq \sqrt{3} a$,
there exists an $r$-coloring of $\mathbb{E}^{2}$
that contains
neither a monochromatic rectangle congruent to $Q$
nor a rainbow rectangle congruent to $Q$.

\begin{proof}[Proof of Theorem \ref{thm:lowerbound}]
For an integer $i$,
let $X_i=\{(x,y)\in \mathbb{E}^2\,|\,ia\leq x<(i+1)a\}$.
Note that $\mathbb{E}^2$ is partitioned into $X_i$'s,
that is,
$\bigcup_{i \in \mathbb{Z}} X_i = \mathbb{E}^2$
and $X_i \cap X_{i'} = \emptyset$ for $i \neq i'$.
Consider the $r$-coloring of $\mathbb{E}^2$
such that the points in $X_i$ are colored by $i \pmod{r}$
for each integer $i$.
For any pair $X_i,X_j$ with $i \equiv j \pmod{r}$,
the distance between a point in $X_i$ and that in $X_j$ is larger than $(r-1)a > b$.
Thus,
if there exists a monochromatic rectangle congruent to $Q$,
then the four points must be contained in only one set $X_i$,
but this is impossible by the definition of $X_i$.
On the other hand,
if there exists a rainbow rectangle congruent to $Q$,
then its points are contained in distinct point sets,
say $X_{i_1},X_{i_2},X_{i_3},X_{i_4}$
with $i_1 < i_2 < i_3 < i_4$.
Since $b\leq \sqrt{3} a$,
it follows that the distance of any two points in $Q$ is at most $2a$,
which contradicts to the fact that the distance of a point in $X_{i_1}$
and a point in $X_{i_4}$ is larger than $2a$.
\end{proof}

\section{Monochromatic triangle or rectangle versus rainbow equilateral triangle}
\label{sec:others}

In this section,
we prove Theorem \ref{th-2-16},
which states that
$(1)$ for any $r$-coloring of $\mathbb{E}^{r+4}$,
there exists a monochromatic unit square or a rainbow equilateral triangle,
and $(2)$ for any $r$-coloring of $\mathbb{E}^{r+3}$,
there exists a monochromatic $(1, 1, \sqrt{2})$-triangle
or a rainbow equilateral triangle.
Recall that an $(a,b,c)$-triangle
is one with side length $a, b$ and $c$.

For the proof,
we use some results on Gallai-Ramsey theory on graphs.
For two graphs $G$ and $H$,
the \emph{$r$-colored Gallai-Ramsey number of $G$ and $H$},
denoted by ${\rm gr}_r(G: H)$,
is the minimum integer $n$ such that
for any $r$-coloring of the edges of the complete graph $K_n$,
there exists a rainbow subgraph isomorphic to $G$
or
a monochromatic subgraph isomorphic to $H$.
We denote the cycle (resp.~the path) of order four by $C_4$ (resp.~$P_4$).
Faudree, Gould, Jacobson, and Magnant
\cite{FaudreeGouldJacobsonMagnant} obtained the following results.
\begin{theorem}{\upshape \cite{FaudreeGouldJacobsonMagnant}}\label{th-2-15}
The following holds:

\noindent
$(1)$ For $r\geq 3$, $\operatorname{gr}_r(K_3:C_4)=r+4$.

\noindent
$(2)$ For $r\geq 3$, $\operatorname{gr}_r(K_3:P_4)=r+3$.
\end{theorem}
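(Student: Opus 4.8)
The plan is to prove both equalities by establishing matching lower and upper bounds, with Gallai's structure theorem for rainbow-triangle-free colorings as the central tool. Recall its statement: in any edge-coloring of a complete graph containing no rainbow triangle (a \emph{Gallai coloring}), the vertex set admits a partition into parts $V_1, \ldots, V_k$ with $k \geq 2$ such that between any two parts exactly one color appears, and the \emph{reduced coloring} on the parts (viewing each $V_i$ as a single vertex of a complete graph $K_k$) uses at most two colors. All of the case analysis below is driven by this partition.

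For the lower bounds I would exhibit explicit colorings. For part $(2)$ I would color $K_{r+2}$ on vertices $v_1, \ldots, v_{r+2}$ by assigning color $t$ to the star centered at $v_t$ joining it to $v_{t+1}, \ldots, v_{r+2}$ for $1 \leq t \leq r-1$, and color $r$ to the triangle on $\{v_r, v_{r+1}, v_{r+2}\}$. Each color class is a star or a triangle, hence contains no $P_4$; and any triangle $\{v_i, v_j, v_k\}$ with $i < j < k$ has its two edges at the smallest-indexed vertex sharing a color, so no rainbow triangle arises. This gives $\operatorname{gr}_r(K_3:P_4) \geq r+3$. For part $(1)$ I would use the star colors $1, \ldots, r-2$ analogously on $K_{r+3}$ and color the leftover $K_5$ on the last five vertices by the two complementary $5$-cycles in colors $r-1$ and $r$; since $C_5$ is $C_4$-free and the five-vertex core uses only two colors, the coloring has neither a monochromatic $C_4$ nor a rainbow triangle, giving $\operatorname{gr}_r(K_3:C_4) \geq r+4$.

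For the upper bounds I would argue by induction on $r$ that a Gallai coloring of $K_n$ with no monochromatic $P_4$ (resp.\ $C_4$) forces $n \leq r+2$ (resp.\ $n \leq r+3$). Fix a Gallai partition with reduced colors $1,2$. A monochromatic path (resp.\ cycle) on four parts in the reduced graph would lift to a monochromatic $P_4$ (resp.\ $C_4$) by choosing one vertex per part, so both reduced color classes are $P_4$-free (resp.\ $C_4$-free); since $R(P_4,P_4)=5$ and $R(C_4,C_4)=6$, the number of parts satisfies $k \leq 4$ (resp.\ $k \leq 5$). Next I would show the parts are small: if a part $V_i$ has $|V_i| \geq 2$, then a vertex of another part together with two vertices of $V_i$ already yields a monochromatic $P_3$, and one checks that $V_i$ can be joined in the reduced graph by a given color to at most one other part, and that a reduction color cannot be reused inside $V_i$ once a neighboring part is large. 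These local obstructions both cap $k$ further and strip reduction colors from the interior of large parts, which is exactly what feeds the induction on the number of colors.

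The main obstacle I anticipate is the upper-bound bookkeeping: organizing the interaction between the two reduction colors and the colors appearing inside the parts so that the induction yields the \emph{exact} bounds $r+4$ and $r+3$ rather than something off by a constant. Concretely, the delicate cases are when the reduced graph has $k \in \{3,4,5\}$ parts of genuinely different sizes, where one must simultaneously exploit that large parts forbid reuse of reduction colors and that a small reduced graph leaves few parts over which to distribute the remaining colors. I would also treat the base case $r=3$ (that is, $\operatorname{gr}_3(K_3:C_4)=7$ and $\operatorname{gr}_3(K_3:P_4)=6$) separately, settling it by the same Gallai-partition analysis together with the two-color values $R(P_4,P_4)=5$ and $R(C_4,C_4)=6$.
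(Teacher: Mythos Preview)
The paper does not prove this statement at all: Theorem~\ref{th-2-15} is quoted as a result of Faudree, Gould, Jacobson, and Magnant \cite{FaudreeGouldJacobsonMagnant} and is used only as a black box in the proof of Theorem~\ref{th-2-16}. There is therefore no ``paper's own proof'' against which to compare your proposal.

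For what it is worth, your outline is the standard Gallai-partition approach used in the original source, and the lower-bound constructions you give are the correct ones. Your own caveat is accurate: the upper-bound induction is where the content lies, and the sketch you give (bounding $k$ via $R(P_4,P_4)=5$ and $R(C_4,C_4)=6$, then arguing that large parts strip reduction colors from the interior) is the right shape but stops short of the actual case analysis. In particular, the claim that a part of size at least~$2$ is joined in a given reduction color to at most one other part is not quite right for the $C_4$ case (a monochromatic $P_3$ in the reduced graph does not by itself produce a $C_4$), so the bookkeeping there needs more care than your sketch suggests.
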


Now, we are ready to prove Theorem \ref{th-2-16}.
\begin{proof}[Proof of Theorem \ref{th-2-16}]
In the $t$-dimensional Euclidean $\mathbb{E}^{t}$,
we define the set $S_t$ of points as
$$
S_t=\left\{\left(\underbrace{0,0,\cdots,0}_{i-1},\frac{1}{\sqrt{2}},\underbrace{0,0,\cdots,0}_{j-i-1},\frac{1}{\sqrt{2}},\underbrace{0,0,\cdots,0}_{t-j}\right) \in \mathbb{E}^t : 1 \leq i < j \leq t \right\}.
$$
Note that $|S_t|={t\choose 2}$.
Let $\psi_t$ be a bijection from $S_t$
to the edge set $\{v_iv_j\,|\,1\leq i<j\leq t\}$ of a complete graph $K_{t}$
with vertex set $\{v_i : 1 \leq i \leq t\}$
such that
$$
\psi_t\left(\left(\underbrace{0,0,\cdots,0}_{i-1},\frac{1}{\sqrt{2}},\underbrace{0,0,\cdots,0}_{j-i-1},\frac{1}{\sqrt{2}},\underbrace{0,0,\cdots,0}_{t-j}\right)\right)=
v_iv_j.
$$
Note that
an $r$-coloring of $S_t$ determines an $r$-coloring of the edges of $K_{t}$.
\medskip

\noindent
$(1)$
We take the set $S_{r+4}$ and an $r$-coloring of $K_{r+4}$ as above.
By Theorem \ref{th-2-15} (1),
there is a rainbow $K_3$ or a monochromatic $C_4$
in the $r$-colored $K_{r+4}$.
\begin{itemize}
\item[] $(a)$ Suppose that there is a rainbow $K_3$,
say $v_1v_2v_3v_1$ without loss of generality.
Then, the three edges $v_1v_2$, $v_2v_3$ and $v_3v_1$ have the different colors.
The points in $\mathbb{E}^{r+4}$
corresponding to the edges $v_1v_2$, $v_2v_3$ and $v_3v_1$ through $\psi_{r+4}^{-1}$
are $A = \Big(\frac{1}{\sqrt{2}},\frac{1}{\sqrt{2}},\underbrace{0,\cdots,0}_{r+2}\Big)$,
$B = \Big(0,\frac{1}{\sqrt{2}},\frac{1}{\sqrt{2}},\underbrace{0,\cdots,0}_{r+1}\Big)$
and $C = \Big(\frac{1}{\sqrt{2}},0,\frac{1}{\sqrt{2}},\underbrace{0,\cdots,0}_{r+1}\Big)$,
respectively.
Since $|AB|=|BC|=|AC|=1$,
the three points $A,B,C$ form a rainbow equilateral triangle in $S_{r+4}\subseteq \mathbb{E}^{r+4}$.

\item[] $(b)$ Suppose that there is a monochromatic $C_4$,
say $v_1v_2v_3v_4v_1$ without loss of generality.
The points in $\mathbb{E}^{r+4}$
corresponding to the edges $v_1v_2$, $v_2v_3$, $v_3 v_4$ and $v_4v_1$ through $\psi_{t+4}^{-1}$
are
$A=\Big(\frac{1}{\sqrt{2}},\frac{1}{\sqrt{2}},\underbrace{0,\cdots,0}_{r+2}\Big)$,
$B=\Big(0,\frac{1}{\sqrt{2}},\frac{1}{\sqrt{2}},\underbrace{0,\cdots,0}_{r+1}\Big)$,
$C=\Big(0,0,\frac{1}{\sqrt{2}},\frac{1}{\sqrt{2}},\underbrace{0,\cdots,0}_{r}\Big)$ and
$D=\Big(\frac{1}{\sqrt{2}},0,0,\frac{1}{\sqrt{2}},\underbrace{0,\cdots,0}_{r}\Big)$,
respectively.
Since $|AB|=|BC|=|CD|=|AD|=1$ and $|AC|=|BD|=\sqrt{2}$,
the four points $A,B,C,D$ form a monochromatic
unit square in $S_{r+4} \subseteq \mathbb{E}^{r+4}$.

\end{itemize}

\noindent
$(2)$
We take the set $S_{r+3}$ and an $r$-coloring of $K_{r+3}$ as above.
By Theorem \ref{th-2-15} (2),
there is a rainbow $K_3$ or a monochromatic $P_4$
in the $r$-colored $K_{r+3}$.
\begin{itemize}
\item[] $(a)$ If there is a rainbow $K_3$,
then by the same was as in (1)-(a),
we can find a rainbow equilateral triangle in $S_{r+3} \subseteq \mathbb{E}^{r+3}$.

\item[] $(b)$
Suppose that there is a monochromatic $P_4$,
say $v_1v_2v_3v_4$ without loss of generality.
The points in $\mathbb{E}^{r+3}$
corresponding to the edges $v_1v_2$, $v_2v_3$ and $v_3v_4$ through $\psi_{r+3}^{-1}$
are
$A=\big(\frac{1}{\sqrt{2}},\frac{1}{\sqrt{2}},\underbrace{0,\cdots,0}_{r+2}\big)$,
$B=\big(0,\frac{1}{\sqrt{2}},\frac{1}{\sqrt{2}},\underbrace{0,\cdots,0}_{r+1}\big)$ and
$C=\big(0,0,\frac{1}{\sqrt{2}},\frac{1}{\sqrt{2}},\underbrace{0,\cdots,0}_{r}\big)$,
respectively.
Since $|AB|=|BC|=1$ and $|AC|=\sqrt{2}$,
the three points $A,B,C$ form a monochromatic
$(1,1,\sqrt{2})$-triangle in $S_{r+3} \subseteq \mathbb{E}^{r+3}$.

\end{itemize}

\end{proof}


\begin{thebibliography}{1}





\bibitem{Bona}
M. B\'{o}na, A Euclidean Ramsey theorem, \emph{Discrete Math.}
122(1-3) (1993), 349--352.

\bibitem{BLS19}
M. Buci\'{c}, S. Letzter, B. Sudakov, $3$-color bipartite Ramsey number of cycles and
paths, \emph{J. Graph
Theory} 92 (2019): 445--459.



\bibitem{CantwellFinite1996}
K. Cantwell, Finite euclidean ramsey theory, \emph{J. Combin.
Theory, Ser. A} 73(2) (1996), 273--285.



\bibitem{Carnielli99}
W.A. Carnielli, On the Ramsey problem for multicolor bipartite graphs, \emph{Adv. Appl. Math.} 22 (1999), 48--59.

\bibitem{MR1464337}
G.A. Dirac, Some theorems on abstract graphs. \emph{Proc. London
Math. Soc.} 2 (1952), 69--81.



\bibitem{EGMRSS1973}
P. Erd\"{o}s, R. Graham, P. Montgomery, B. Rothschild, J. Spencer,
E. Straus, Euclidean ramsey theorems. I. \emph{J. Combin. Theory,
Ser. A} 14(3) (1973), 341--363.





\bibitem{FaudreeGouldJacobsonMagnant}
R.J. Faudree, R.J. Gould, M.S. Jacobson, C. Magnant, Ramsey numbers
in rainbow triangle free colorings, \emph{Australasian J. Combin.}
46 (2010), 269--284.

\bibitem{FM}
S. Fujita, C. Magnant, Extensions of Gallai-Ramsey results, \emph{J. Graph
Theory} 70 (4) (2012): 404--426.

\bibitem{FMO14}
S. Fujita, C. Magnant, K. Ozeki, Rainbow generalizations of {R}amsey
theory--a dynamic survey, {\em Theo. Appl. Graphs}, 0(1), 2014.





\bibitem{GrahamTressler}
R. Graham, E. Tressler, Open problems in Euclidean Ramsey theory,
Ramsey Theory, Springer, 2011.

\bibitem{GrahamOldNew}
R.L. Graham, Old and new Euclidean Ramsey theorems, \emph{Annals of
the New York Academy of Sciences}, 440(1) (1985), 20--30.

\bibitem{GrahamRecentTrend}
R.L. Graham, Recent trends in Euclidean Ramsey theory,
\emph{Discrete Math.} 136(1-3) (1994), 119--127.

\bibitem{MR0221974}
T. Gallai, Transitiv orientierbare {G}raphen, {\em Acta Math. Acad.
Sci. Hungar.} 18 (1967), 25--66.





\bibitem{MR2063371}
A. Gy{\'a}rf{\'a}s, G. Simonyi, Edge colorings of complete graphs
without tricolored triangles, {\em J. Graph Theory} 46(3) (2004),
211--216.

\bibitem{GrahamRothschildSpencer}
R.L. Graham, B.L. Rothschild, J.H. Spencer, \emph{Ramsey Theory},
JOHN WILEY \& SONS, 1990.

\bibitem{HH98}
J.H. Hattingh, M.A. Henning, Bipartite Ramsey theory, \emph{Utilitas Math.} 53 (1998), 217--230.



\bibitem{HJ14}
J.H. Hattingh, E.J. Joubert, Some bistar bipartite Ramsey numbers, \emph{Graphs Combin.} 30 (2014), 1175--1181.



\bibitem{LB}
X. Li, H. Broersma, L. Wang, The Erd\H{o}s-Gy\'{a}rf\'{a}s function with respect to Gallai-colorings, \emph{J. Graph
Theory}, in press.

\bibitem{LWL19}
X. Li, L. Wang, X. Liu, Complete graphs and complete bipartite graphs without rainbow path, \emph{Discrete Mathematics} 342 (2019), 2116--2126.


\bibitem{MSonline}
C. Magnant, I. Schiermeyer, Gallai-Ramsey number for ${K}_{5}$, \emph{J. Graph
Theory}, in press.


\bibitem{Radziszowski}
S.~P. Radziszowski, Small {R}amsey numbers, {\em Electron. J.
Combin.}, 1: Dynamic Survey 1, 30 pp. (electronic), 1994.



\bibitem{Toth1996}
G. T\'{o}th, A Ramsey-type bound for rectangles, \emph{J. Graph
Theory} 23(1) (1996), 53--56.

\bibitem{WLL21}
Y. Wang, Y. Li, Y. Li, Bipartite Ramsey numbers of $K_{t,s}$ in many colors,
\emph{Appl. Math. Comput.} 404 (2021), 126220.

\end{thebibliography}
\end{document}